\newtheorem{theorem}{Theorem}[section]
\newtheorem{lemma}{Lemma}[section]
\newtheorem{proposition}{Proposition}[section]
\theoremstyle{remark}
\newtheorem{remark}{Remark}[section]
\newtheorem{example}{Example}[section]
\numberwithin{equation}{section}
\newcommand\ep\varepsilon
\newcommand{\R}{\mathbb R}
\DeclareMathOperator{\essinf}{\rm ess\,inf}
\newcommand{\beq}{\begin{equation}}
\newcommand{\eeq}{\end{equation}}
\begin{document}
\title{\textbf{Multiple nodal solutions to a Robin problem with sign-changing potential and locally defined reaction}}
\author{
\bf Umberto Guarnotta, Salvatore A. Marano\thanks{Corresponding Author}\\
\small{Dipartimento di Matematica e Informatica,
Universit\`a degli Studi di Catania,}\\
\small{Viale A. Doria 6, 95125 Catania, Italy}\\
\small{\it E-mail: umberto.guarnotta@gmail.com, marano@dmi.unict.it}\\
\mbox{}\\
\bf Nikolaos S. Papageorgiou\\
\small{Department of Mathematics, National Technical University of Athens,}\\
\small{Zografou Campus, 15780 Athens, Greece}\\
\small{\it E-mail: npapg@math.ntua.gr}
}
\date{}
\maketitle
\centerline{\textit{Dedicated to Professor Carlo Sbordone on the occasion of his seventieth birthday}}
\begin{abstract}
A Robin boundary-value problem with non-homogeneous differential operator, indefinite potential, and reaction defined only near zero is investigated. The existence of one or more nodal solutions is achieved by using truncation, perturbation, and comparison techniques, results from Morse theory, besides variational methods.
\end{abstract}
\vspace{2ex}
\noindent\textbf{Keywords:} Robin problem, nodal solutions, indefinite potential, locally defined reaction.
\vspace{2ex}

\noindent\textbf{AMS Subject Classification:} 35J20, 35J60, 58E05.
\section{Introduction}
The main purpose of this paper is to investigate both existence and multiplicity of nodal $C^1$-solutions to the following Robin boundary-value problem:

\begin{equation}\label{prob}
\left\{
\begin{array}{ll}
-{\rm div}(a(\nabla u))+\alpha(x)|u|^{p-2}u=f(x, u)\quad & \mbox{in}\quad\Omega,\\
\displaystyle{\frac{\partial u}{\partial n_a}}+\beta(x)|u|^{p-2}u=0\quad & \mbox{on}\quad\partial\Omega.\\
\end{array}
\right.
\end{equation}
Here, $\Omega$ denotes a bounded domain in $\R^N$, $N\geq 3$, with a smooth boundary $\partial\Omega$, the coefficient $\alpha$ is essentially bounded but sign-changing, $\beta$ lies in $C^{0,\gamma}(\partial\Omega)$ and takes non-negative values,  $1<p<+\infty$, the reaction $f:\Omega\times[-\theta,\theta]\to\R$ satisfies Carath\'{e}odory's conditions. Moreover, $a:\R^N\to\R^N$ indicates a strictly monotone map having appropriate regularity and growth properties, while $\frac{\partial}{\partial n_a}$ stands for the co-normal derivative associated with $a$; cf. Section \ref{sectiontwo}.\\
Problem \eqref{prob} exhibits at least three interesting features:
\begin{itemize}
\item[i)] We do not require that $\xi\mapsto a(\xi)$ be $(p-1)$-homogeneous. So, meaningful differential operators, as the $(p,q)$-Laplacian, are incorporated in \eqref{prob}.
\item[ii)] The potential term $u\mapsto\alpha(x)|u|^{p-2}u$ turns out indefinite, because $\alpha$ can change sign.
\item[iii)] $t\mapsto f(x,t)$ is only locally defined, whence its behavior near zero matters, and no conditions at infinity are imposed.
\end{itemize}
Via truncation-perturbation-comparison techniques, results from Morse theory, besides variational methods, we obtain a nodal solution $\hat u\in C^1(\overline{\Omega})$ of \eqref{prob}; see Theorems \ref{thmone}--\ref{thmtwo} below. The case $p>2$ and $a(\xi):=(|\xi|^{p-2}+1)\xi$, namely when the $(p,2)$-Laplacian appears, is examined next in Theorem \ref{ptwocase}, which allows also $f$ to be resonant.

As far as we know, the existence of sign-changing solutions to Robin problems that exhibit difficulties i)--iii)  did not receive much attention up to now. Topics i) and, somehow, iii) have been recently addressed in \cite{PW-AM2016}, while \cite{FMP,MaPa-RLM2018} investigate ii) but for $a(\xi):=|\xi|^{p-2}\xi$. Further items can evidently be found in their bibliographies.

Section \ref{sectionfour} deals with multiplicity. If $f(x,\cdot)$ is odd, Theorem \ref{sequencesol} gives a whole sequence
$\{u_n\}\subseteq C^1(\overline{\Omega})$ of nodal solutions to \eqref{prob} such that $u_n\to 0$ in $C^1(\overline{\Omega})$. The works \cite{HYSZ-NA2015,MaMoPa-RLM2018} contain similar results concerning Dirichlet problems without indefinite potential. All of them exploit an abstract theorem by Kajikiya \cite{Ka-JFA2005}. Once the map $a$ is particularized, we can do without symmetry and still produce two or three nodal $C^1$-solutions; cf. Theorem \ref{spcaseone}, which treats $(p,2)$-Laplace equations, and Theorem \ref{spcasetwo}, where in addition $p=2$. It should be pointed out that one solution always comes from a flow invariance argument patterned after that of \cite{HHLL}, devoted to Neumann's case; see also \cite{PaPa-IJM2014}.
\section{Preliminaries. The map $\xi\mapsto a(\xi)$}\label{sectiontwo}
Let $(X,\Vert\cdot\Vert)$ be a real Banach space. Given a set $V\subseteq X$, write $\overline{V}$ for the closure of $V$, $\partial V$ for the boundary of $V$, and ${\rm int}_X(V)$ or simply ${\rm int}(V)$, when no confusion can arise, for the interior of $V$. If $x\in X$ and $\delta>0$ then
$$B_\delta(x):=\{ z\in X:\;\Vert z-x\Vert<\delta\}\, ,\quad B_\delta:=B_\delta(0)\, .$$
The symbol $(X^*,\Vert\cdot \Vert_{X^*})$ denotes the dual space of $X$, $\langle\cdot,\cdot\rangle$ indicates the duality pairing between $X$ and $X^*$, while $x_n\to x$ (respectively, $x_n\rightharpoonup x$) in $X$ means `the sequence $\{x_n\}$ converges strongly (respectively, weakly) in $X$'. We say that $A:X\to X^*$ is of type $({\rm S})_+$ provided
$$x_n\rightharpoonup x\;\mbox{ in }\; X,\quad
\limsup_{n\to+\infty}\langle A(x_n),x_n-x\rangle\leq 0\quad\implies\quad x_n\to x.$$
The function $\Phi:X\to\mathbb{R}$ is called coercive if $\displaystyle{\lim_{\Vert x\Vert\to+\infty}}\Phi(x)=+\infty$ and weakly sequentially lower semi-continuous when
$$x_n\rightharpoonup x\;\mbox{ in }\; X\quad\implies\quad\Phi(x)\leq\liminf_{n\to\infty}\Phi(x_n).$$
Let $\Phi\in C^1(X)$. The classical Palais-Smale compactness condition for $\Phi$ reads as follows.
\begin{itemize}
\item[$({\rm PS})$] {\it Every sequence $\{x_n\}\subseteq X$ such that $\{\Phi(x_n)\}$ is bounded and  $\displaystyle{\lim_{n\to+\infty}}\Vert \Phi'(x_n) \Vert_{X^*}=0$ has a convergent subsequence.}
\end{itemize}
The next elementary result \cite[Proposition 2.2]{MaPa-PEMS2014} will be employed later.
\begin{proposition}\label{coercivePS}
Suppose $X$ reflexive, $\Phi\in C^1(X)$ coercive, and $\Phi'=A+B$, with $A$ of type $({\rm S})_+$ and $B$ compact. Then $\Phi$ satisfies $({\rm PS})$.
\end{proposition}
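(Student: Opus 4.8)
The plan is to verify the abstract Palais--Smale condition directly from the three hypotheses. Fix a sequence $\{x_n\}\subseteq X$ such that $\{\Phi(x_n)\}$ is bounded and $\Vert\Phi'(x_n)\Vert_{X^*}\to 0$. First I would exploit the coercivity of $\Phi$ to deduce that $\{x_n\}$ is bounded in $X$: if not, some subsequence would satisfy $\Vert x_n\Vert\to+\infty$, whence $\Phi(x_n)\to+\infty$, contradicting the boundedness of $\{\Phi(x_n)\}$. Since $X$ is reflexive, bounded sequences admit weakly convergent subsequences, so along a subsequence (not relabeled) $x_n\rightharpoonup x$ in $X$ for a suitable $x\in X$.

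The heart of the proof is the identity
$$\langle A(x_n),x_n-x\rangle=\langle\Phi'(x_n),x_n-x\rangle-\langle B(x_n),x_n-x\rangle,$$
together with the claim that both terms on the right-hand side vanish in the limit. For the first one, the sequence $\{x_n-x\}$ is bounded, hence $|\langle\Phi'(x_n),x_n-x\rangle|\leq\Vert\Phi'(x_n)\Vert_{X^*}\,\Vert x_n-x\Vert\to 0$. For the second one, compactness of $B$ provides (after passing to a further subsequence, if needed) a strong limit $B(x_n)\to\eta$ in $X^*$; combining this strong convergence with the weak convergence $x_n-x\rightharpoonup 0$ in $X$ gives $\langle B(x_n),x_n-x\rangle\to 0$. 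Therefore $\langle A(x_n),x_n-x\rangle\to 0$, and in particular $\limsup_{n\to+\infty}\langle A(x_n),x_n-x\rangle\leq 0$.

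At this point I would invoke the $({\rm S})_+$ property of $A$: from $x_n\rightharpoonup x$ in $X$ and $\limsup_{n\to+\infty}\langle A(x_n),x_n-x\rangle\leq 0$ it follows that $x_n\to x$ strongly in $X$. This yields the convergent subsequence required by $({\rm PS})$, and the proof is complete.

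The argument is essentially routine; the only step that calls for a little care is the treatment of the term $\langle B(x_n),x_n-x\rangle$, which hinges on the precise meaning attached to ``$B$ compact''. If it means that $B$ maps bounded sets into relatively compact ones, one extracts a strongly convergent subsequence of $\{B(x_n)\}$ as above; if it means that $B$ is completely continuous, then $B(x_n)\to B(x)$ outright. In either case the successive passages to subsequences are harmless, since $({\rm PS})$ only demands the existence of one convergent subsequence.
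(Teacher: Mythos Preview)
Your argument is correct and is precisely the standard verification of (PS) under these hypotheses. The paper itself does not supply a proof of this proposition; it merely quotes the result from \cite[Proposition 2.2]{MaPa-PEMS2014}, so there is nothing to compare beyond noting that your write-up matches the usual proof one finds in that reference.
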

Define, for every $c\in\mathbb{R}$,
$$\Phi^c:=\{x\in X:\; \Phi(x)\leq c\}\, ,\quad K_c(\Phi):=K(\Phi)\cap\Phi^{-1}(c)\, ,$$
where, as usual, $K(\Phi)$ denotes the critical set of $\Phi$, i.e., $K(\Phi):=\{x\in X:\,\Phi'(x)=0\}$. 

Given a topological pair $(A,B)$ fulfilling $B\subset A\subseteq X$, the symbol $H_k(A,B)$, $k\in\mathbb{N}_0$, indicates the
${\rm k}^{\rm th}$-relative singular homology group of $(A,B)$ with integer coefficients. If $x_0\in K_c(\Phi)$ is an isolated point of $K(\Phi)$ then
$$C_k(\Phi,x_0):=H_k(\Phi^c\cap V,\Phi^c\cap V\setminus\{x_0\})\, ,\quad k\in
\mathbb{N}_0\, ,$$
are the critical groups of $\Phi$ at $x_0$. Here, $V$ stands for any neighborhood of $x_0$ such that $K(\Phi)\cap\Phi^c\cap V=\{x_0\}$. By excision, this definition does not depend on the choice of $V$. Suppose $\Phi$ satisfies condition $({\rm PS})$, $\Phi|_{K(\Phi)}$ is bounded below, and $c<\displaystyle{\inf_{x\in K(\Phi)}}\Phi(x)$. Put
$$C_k(\Phi,\infty):=H_k(X,\Phi^c)\, ,\quad k\in\mathbb{N}_0\,.$$
The second deformation lemma \cite[Theorem 5.1.33]{GaPa-NA2006} implies that this definition does not depend on the choice of $c$. If $K(\Phi)$ is finite, then setting
$$M(t,x):=\sum_{k=0}^{+\infty}{\rm rank}\, C_k(\Phi,x)t^k\, ,\quad
P(t,\infty):=\sum_{k=0}^{+\infty}{\rm rank}\, C_k(\Phi,\infty)t^k\quad
\forall\, (t,x)\in\mathbb{R}\times K(\Phi)\, ,$$
the following Morse relation holds:
\begin{equation}\label{morse}
\sum_{x\in K(\Phi)}M(t,x)=P(t,\infty)+(1+t)Q(t)\, ,
\end{equation}
where $Q(t)$ denotes a formal series with nonnegative integer coefficients; see for instance \cite[Theorem 6.62]{MoMoPa}. 

Now, let $X$ be a Hilbert space, let $x\in K(\Phi)$, and let $\Phi$ be $C^2$ in a neighborhood of $x$. If $\Phi''(x)$ turns out to be invertible, then $x$ is called non-degenerate. The Morse index $d$ of $x$ is the supremum of the dimensions of the vector subspaces of $X$ on which $\Phi''(x)$ turns out to be negative definite. When $x$ is non-degenerate and with Morse index $d$ one has $C_k(\Phi,x)=\delta_{k,d}\mathbb{Z}$, $k\in\mathbb{N}_0$. The monograph \cite{MoMoPa} represents a general reference on the subject.

Henceforth, $\Omega$ will denote a bounded domain of the real euclidean $N$-space $(\mathbb{R}^N,|\cdot|)$, $N\geq 3$, with a $C^2$-boundary $\partial\Omega$, on which we will employ the $(N-1)$-dimensional Hausdorff measure $\sigma$. The symbol $n(x)$ indicates the outward unit normal vector to $\partial\Omega$ at its point $x$,
$p\in ]1,+\infty[$, $p':=p/(p-1)$, $\Vert\cdot\Vert_s$ with $s\geq 1$ is the usual norm of $L^s(\Omega)$, $X:=W^{1,p}(\Omega)$, and
$$\Vert u\Vert:=\left(\Vert u\Vert_p^p+\Vert\nabla u\Vert_p^p\right)^{1/p},\quad u\in X.$$
Write $p^*$ for the critical exponent of the Sobolev embedding $W^{1,p}(\Omega)\hookrightarrow L^s(\Omega)$. Recall that $p^*=Np/(N-p)$ if $p<N$, $p^*=+\infty$ otherwise, and the embedding turns out to be compact whenever $1\leq s<p^*$. Given $t\in\mathbb{R}$ and $u,v:\Omega\to\mathbb{R}$, $t^\pm:=\max\{\pm t,0\}$, $u^\pm (x):=u(x)^\pm$, $u\leq v$ (resp., $u<v$, etc.) means $u(x)\leq v(x)$ (resp., $u(x)<v(x)$, etc.) for almost every $x\in\Omega$. If $u,v$ belongs to a function space, say $Y$, then we set
$$[u,v]:=\{ w\in Y:u\leq w\leq v\}\, ,\quad Y_+:=\{w\in Y:w\geq 0\}\, .$$
Putting $C_+:= C^1(\overline{\Omega})_+$ and ${\rm int}(C_+):={\rm int}_{C^1(\overline{\Omega})}(C_+)$, one evidently has
\begin{equation*}
{\rm int}(C_+):=\{u\in C_+:u(x)>0\;\;\forall\, x\in\overline{\Omega}\}\, .
\end{equation*}

From now on, $c_1,c_2,\ldots$ denote appropriate positive constants while $\R^+:=\, ]0,+\infty[$.\\
 Let $\omega\in C^1(\R^+)$ and let $\tau\in[1,p[$ satisfy
\begin{equation}\label{assomega}
c_1\leq\frac{t\omega'(t)}{\omega(t)}\leq c_2,\quad c_3t^{p-1}\leq\omega(t)\leq c_4(t^{\tau-1}+t^{p-1}),\quad t\in\R^+.
\end{equation}
The following assumptions on $a:\R^N\to\R^N$ will be posited.
\begin{itemize}
\item[$({\rm a}_1)$] $a(\xi):=a_0(|\xi|)\xi$ for all $\xi\in\R^N$, where $a_0\in C^1(\R^+,\R^+)$, $t\mapsto ta_0(t)$ turns out to be strictly increasing on $\R^+$, and
$$\lim_{t\to 0^+}ta_0(t)=0,\quad\lim_{t\to 0^+}\frac{ta_0'(t)}{a_0(t)}>-1.$$
\item[$({\rm a}_2)$] $\displaystyle{|\nabla a(\xi)|\leq c_5\frac{\omega(|\xi|)}{|\xi|}}$ for every $\xi\in\R^N\setminus\{0\}$.
\item[$({\rm a}_3)$] $\displaystyle{(\nabla a(\xi)y)\cdot y\geq\frac{\omega(|\xi|)}{|\xi|}|y|^2}$ for all $\xi,y\in\R^N$ with $\xi\neq 0$.
\item[$({\rm a}_4)$] If $G_0(t):=\int_0^t sa_0(s)ds$, then there exist $1<\hat q<q\leq p$ such that
$$c_6t^p\leq t^2a_0(t)-\hat q G_0(t)\quad\forall\, t\in\R^+,\quad \lim_{t\to 0^+}\frac{qG_0(t)}{t^q}=c_7,$$
and $t\mapsto G_0(t^{1/q})$ is convex on $[0,+\infty[$.
\end{itemize}
\begin{remark}
Conditions $({\rm a}_1)$--$({\rm a}_3)$ come from Lieberman's nonlinear regularity theory \cite{L} and Pucci-Serrin's maximum principle \cite{PS}.
\end{remark}
\begin{lemma}\label{mapa}
Let $({\rm a}_1)$--$({\rm a}_3)$ be satisfied. Then:
\begin{itemize}
\item[$({\rm i}_1)$] $\xi\mapsto a(\xi)$ is strictly monotone, continuous, and, a fortiori, maximal monotone.
\item[$({\rm i}_2)$] $|a(\xi)|\leq c_8(|\xi|^{\tau-1}+|\xi|^{p-1})$ for every $\xi\in\R^N$.
\item[$({\rm i}_3)$] $\displaystyle{a(\xi)\cdot\xi\geq\frac{c_3}{p-1}|\xi|^p}$ for all $\xi\in\R^N$.
\end{itemize}
\end{lemma}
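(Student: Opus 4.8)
The plan is to obtain $({\rm i}_2)$ and $({\rm i}_3)$ by direct manipulation of the radial form of $a$ together with the structure conditions, and to deduce $({\rm i}_1)$ from a one-dimensional integral representation combined with $({\rm a}_3)$; maximal monotonicity will then come for free. First I would record the elementary facts. Since $a_0\in C^1(\R^+,\R^+)$, the map $a$ belongs to $C^1(\R^N\setminus\{0\},\R^N)$, with $\nabla a(\xi)=a_0(|\xi|)I+\dfrac{a_0'(|\xi|)}{|\xi|}\,\xi\otimes\xi$; hence $\nabla a(\xi)y=a_0(|\xi|)\,y$ whenever $y\cdot\xi=0$, and $(\nabla a(\xi)\xi)\cdot\xi=|\xi|^2\bigl(a_0(|\xi|)+|\xi|a_0'(|\xi|)\bigr)$. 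Putting $a(0):=0$, the first limit in $({\rm a}_1)$ gives $|a(\xi)|=|\xi|a_0(|\xi|)\to 0$ as $\xi\to 0$, so $a\in C(\R^N,\R^N)$; this is the continuity claimed in $({\rm i}_1)$. The main device will be the representation
\[ a(\xi)-a(\eta)=\int_0^1\nabla a\bigl(\eta+t(\xi-\eta)\bigr)(\xi-\eta)\,dt ,\qquad \xi,\eta\in\R^N , \]
which I would justify as follows: the segment joining $\eta$ and $\xi$ meets the origin in at most one point, off which the integrand is continuous, and by $({\rm a}_2)$ it is bounded by $c_5\,\omega(|\cdot|)/|\cdot|$, a function integrable along the segment thanks to \eqref{assomega} (when $\tau>1$ use $\omega(s)\le c_4(s^{\tau-1}+s^{p-1})$; when $\tau=1$, integrating $c_1\le s\omega'(s)/\omega(s)$ yields $\omega(s)\lesssim s^{c_1}$ near $0$, with $c_1>0$, which again suffices); so the fundamental theorem of calculus applies after excising a shrinking interval around the exceptional point, the passage to the limit being legitimate since $a$ is continuous at $0$.

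Granting this, $({\rm i}_2)$ is immediate: choosing $0\ne y\perp\xi$ (possible since $N\ge 2$) and using $({\rm a}_2)$,
\[ a_0(|\xi|)\,|y|=|\nabla a(\xi)y|\le|\nabla a(\xi)|\,|y|\le c_5\,\frac{\omega(|\xi|)}{|\xi|}\,|y| , \]
whence $|a(\xi)|=|\xi|a_0(|\xi|)\le c_5\,\omega(|\xi|)\le c_4c_5\bigl(|\xi|^{\tau-1}+|\xi|^{p-1}\bigr)$ by \eqref{assomega}, so $c_8:=c_4c_5$ works. For $({\rm i}_3)$ I would specialise the representation to $\eta=0$ and insert $({\rm a}_3)$ with base point $t\xi$ and direction $\xi$:
\[ a(\xi)\cdot\xi=\int_0^1\bigl(\nabla a(t\xi)\xi\bigr)\cdot\xi\,dt\ge\int_0^1\frac{\omega(t|\xi|)}{t|\xi|}\,|\xi|^2\,dt=|\xi|\int_0^1\frac{\omega(t|\xi|)}{t}\,dt\ge c_3|\xi|^p\int_0^1 t^{p-2}\,dt=\frac{c_3}{p-1}\,|\xi|^p , \]
using $\omega(s)\ge c_3s^{p-1}$ from \eqref{assomega} and $\int_0^1 t^{p-2}\,dt=(p-1)^{-1}<\infty$ since $p>1$.

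It remains to prove strict monotonicity in $({\rm i}_1)$. For $\xi\ne\eta$, setting $\gamma(t):=\eta+t(\xi-\eta)$, the representation above and $({\rm a}_3)$ give
\[ \bigl(a(\xi)-a(\eta)\bigr)\cdot(\xi-\eta)=\int_0^1\bigl(\nabla a(\gamma(t))(\xi-\eta)\bigr)\cdot(\xi-\eta)\,dt\ge\int_0^1\frac{\omega(|\gamma(t)|)}{|\gamma(t)|}\,|\xi-\eta|^2\,dt>0 , \]
because the integrand is strictly positive for every $t$ with $\gamma(t)\ne 0$, such $t$ exhaust $[0,1]$ up to at most one point, and $\omega>0$ on $\R^+$ by \eqref{assomega}. (Alternatively, $a=\nabla G$ with $G(\xi):=G_0(|\xi|)$ strictly convex on $\R^N$ --- positive-definite Hessian $\nabla a$ off the origin by $({\rm a}_3)$, and strict convexity of the radial profile $t\mapsto G_0(|t|)$ on $\R$ from the monotonicity of $t\mapsto ta_0(t)$ together with $\lim_{t\to0^+}ta_0(t)=0$ in $({\rm a}_1)$ --- so that $\nabla G=a$ is strictly monotone.) Finally, a monotone operator defined on all of the reflexive space $\R^N$ and continuous there is, in particular, hemicontinuous, hence maximal monotone by the standard characterisation; this is the ``a fortiori'' in the statement.

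I do not expect a real obstacle: everything reduces to one-variable estimates anchored on $a(\xi)=a_0(|\xi|)\xi$. The only delicate point is the singularity of $\nabla a$ at $\xi=0$ --- one must check that the scalar integrals above really converge and that the fundamental theorem of calculus survives the passage of the integration path through the origin --- and, inside that check, the borderline exponent $\tau=1$, for which the crude growth bound on $\omega$ in \eqref{assomega} is not enough and one must instead exploit the first inequality there, $c_1\le s\omega'(s)/\omega(s)$, to secure integrability near $0$.
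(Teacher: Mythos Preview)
Your proof is correct and, for $({\rm i}_3)$, identical to the paper's; the paper simply declares $({\rm i}_1)$ and $({\rm i}_2)$ ``obvious'' without elaboration, so your careful treatment of the origin singularity and the $\tau=1$ borderline goes well beyond what the authors supply. Your perpendicular-vector trick for $({\rm i}_2)$ and the segment-integral argument for strict monotonicity are sound fills of those gaps.
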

\begin{proof} Conclusions $({\rm i}_1)$--$({\rm i}_2)$ are obvious. Let us verify $({\rm i}_3)$. Because of $({\rm a}_3)$ and \eqref{assomega} we easily obtain
\begin{equation*}
a(\xi)\cdot\xi=\int_0^1\frac{d}{dt}a(t\xi)\cdot\xi\, dt=\int_0^1(\nabla a(t\xi)\xi)\cdot\xi\, dt\geq\int_0^1\frac{\omega(t|\xi|)}{t|\xi|}|\xi|^2 dt\geq\frac{c_3}{p-1}|\xi|^p
\end{equation*}
for every $\xi\in\R^N$.
\end{proof}
\begin{remark}
Thanks to $({\rm a}_1)$, the function $G_0$ defined in $({\rm a}_4)$ is strictly increasing and convex. Consequently, also the map $G:\R^N\to\R$ given by
\begin{equation*}
G(\xi):=G_0(|\xi|),\quad\xi\in\R^N,
\end{equation*}
turns out convex. Moreover, $G(0)=0$, 
\begin{equation}\label{prop1G}
\nabla G(\xi)=G_0'(|\xi|)\frac{\xi}{|\xi|}=a_0(|\xi|)\xi=a(\xi)\;\;\text{provided}\;\;\xi\neq 0,
\end{equation}
as well as, on account of $({\rm a}_1)$,
\begin{equation}\label{prop2G}
G(\xi)\leq a(\xi)\cdot\xi\quad\forall\,\xi\in\R^N.
\end{equation}
\end{remark}
\begin{lemma}
If $({\rm a}_1)$--$({\rm a}_4)$ hold true, then
\begin{equation}\label{estG}
\frac{c_3}{p(p-1)}|\xi|^p\leq G(\xi)\leq c_9\left[|\xi|^q+|\xi|^p\right],\;\;\xi\in\R^N.
\end{equation}
\end{lemma}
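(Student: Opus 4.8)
The plan is to derive both inequalities in \eqref{estG} from the corresponding estimates on the scalar primitive $G_0$, since $G(\xi)=G_0(|\xi|)$ and $t=|\xi|$. For the lower bound, I would start from the definition $G_0(t)=\int_0^t s a_0(s)\,ds$ and use the pointwise bound $s^2 a_0(s)=s\cdot(s a_0(s))\geq\frac{c_3}{p-1}s^p$, which is exactly estimate $({\rm i}_3)$ of Lemma \ref{mapa} read in one dimension (or, equivalently, obtained directly from the left inequality in \eqref{assomega} together with $({\rm a}_1)$). Dividing by $s$ gives $s a_0(s)\geq\frac{c_3}{p-1}s^{p-1}$, and integrating from $0$ to $t$ yields $G_0(t)\geq\frac{c_3}{p(p-1)}t^p$; replacing $t$ by $|\xi|$ gives the left-hand side of \eqref{estG}.

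For the upper bound, I would integrate the growth estimate $\omega(t)\leq c_4(t^{\tau-1}+t^{p-1})$ from \eqref{assomega}. The point is to relate $a_0$ to $\omega$: from $({\rm a}_2)$, or more directly from the structure, one has $t a_0(t)\leq$ (const)$\,\omega(t)$ for small $t$, but cleaner is to use $({\rm a}_4)$ itself. Indeed, the first inequality in $({\rm a}_4)$, $c_6 t^p\leq t^2 a_0(t)-\hat q G_0(t)$, shows $G_0$ cannot grow too slowly, but for an \emph{upper} bound I would instead combine the second condition $\lim_{t\to0^+}qG_0(t)/t^q=c_7$ (which controls $G_0$ near zero by $\sim t^q$) with the bound $\omega(t)\leq c_4(t^{\tau-1}+t^{p-1})$ and the relation $t a_0(t)=\omega(t)$ up to the normalization chosen so that $c_3 t^{p-1}\leq\omega(t)=t a_0(t)$; wait—more carefully: since $s a_0(s)$ is the integrand, and $({\rm a}_1)$ plus \eqref{assomega} give $s a_0(s)\leq c_4(s^{\tau-1}+s^{p-1})\cdot(\text{const})$ once we identify $\omega$ with a function comparable to $t\mapsto t a_0(t)$, integration gives $G_0(t)\leq c(t^\tau+t^p)$. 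Finally, since $1<\tau<p$ and also $1<\hat q<q\le p$, and near zero the decisive exponent is $q$ by the limit in $({\rm a}_4)$ while for large $|\xi|$ the $|\xi|^p$ term dominates, I would absorb $|\xi|^\tau$ (and likewise reconcile the exponent $\tau$ with $q$) by splitting into the regimes $|\xi|\leq1$ and $|\xi|>1$: for $|\xi|\leq1$ use the near-zero asymptotics to bound $G_0(|\xi|)\leq c|\xi|^q$, and for $|\xi|>1$ bound $G_0(|\xi|)\leq c|\xi|^p$. Taking $c_9$ to be the maximum of the constants produced yields $G(\xi)\leq c_9(|\xi|^q+|\xi|^p)$.

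The main obstacle I anticipate is bookkeeping the relation between the three exponents $\tau$, $q$, and $\hat q$ and the two functions $\omega$ and $t\mapsto ta_0(t)$: the hypotheses \eqref{assomega} are stated for $\omega$, whereas $G_0$ is built from $a_0$, so one must first record that $({\rm a}_1)$--$({\rm a}_3)$ force $t a_0(t)$ to be comparable to $\omega(t)$ (the inequality $({\rm a}_3)$ with $y=\xi$ gives one direction after the integration already performed in Lemma \ref{mapa}, and $({\rm a}_2)$ the other). Once that comparability is in hand, both bounds are routine integrations, and the passage from a $t^\tau$ term to the cleaner $|\xi|^q+|\xi|^p$ form is just the case split on $|\xi|\lessgtr1$ using $\tau<p$ and the limit relation fixing the near-zero order as $q$.
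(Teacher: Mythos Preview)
Your lower-bound argument is essentially the paper's, just written in scalar form on $G_0$ rather than via the line integral $G(\xi)=\int_0^1 a(t\xi)\cdot\xi\,dt$; both amount to integrating $sa_0(s)\ge\frac{c_3}{p-1}s^{p-1}$.

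For the upper bound you take a detour that the paper avoids. You spend effort arguing that $ta_0(t)$ is comparable to $\omega(t)$ in order to transfer the growth hypothesis \eqref{assomega} to the integrand of $G_0$. The paper instead uses two ready-made facts: the convexity inequality \eqref{prop2G}, namely $G(\xi)\le a(\xi)\cdot\xi$, and conclusion $({\rm i}_2)$ of Lemma~\ref{mapa}, namely $|a(\xi)|\le c_8(|\xi|^{\tau-1}+|\xi|^{p-1})$. Together these give $G(\xi)\le c_8(|\xi|^\tau+|\xi|^p)$ in one line, with no need to compare $\omega$ and $ta_0(t)$. (Equivalently, you could simply quote $({\rm i}_2)$ in its scalar form, $sa_0(s)\le c_8(s^{\tau-1}+s^{p-1})$, and integrate---this bypasses the ``comparability'' issue you flag as the main obstacle.) Your final step---splitting $|\xi|\le1$ versus $|\xi|>1$ and invoking the limit $qG_0(t)/t^q\to c_7$ from $({\rm a}_4)$ to replace the exponent $\tau$ by $q$ near the origin---is exactly what the paper intends by ``easily follows from $({\rm a}_4)$, \eqref{prop2G}, and $({\rm i}_2)$''. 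So your strategy is sound; the comparability discussion is a self-imposed complication rather than a genuine gap.
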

\begin{proof} Through \eqref{prop1G} and $({\rm i}_3)$ in Lemma \ref{mapa} we get
$$G(\xi)=\int_0^1\frac{d}{dt}G(t\xi)\, dt=\int_0^1a(t\xi)\cdot\xi\, dt\geq\int_0^1\frac{c_3}{p-1}|\xi|^pt^{p-1}dt=\frac{c_3}{p(p-1)}|\xi|^p.$$
The other inequality easily follows from $({\rm a}_4)$, \eqref{prop2G}, and $({\rm i}_2)$ of Lemma \ref{mapa}.
\end{proof}
\begin{example}
The functions $a_0$ listed below comply with $({\rm a}_1)$--$({\rm a}_4)$.
\begin{itemize}
\item $a_0(t):=t^{p-2}$ for every $t\in\R^+$. It corresponds to the well-known $p$-Laplacian $\Delta_p$, defined by
$$\Delta_p u:={\rm div}\, (|\nabla u|^{p-2}\nabla u),\;\; u\in X.$$
\item $a_0(t):=t^{p-2}+t^{q-2}$ for all $t\in\R^+$. The associated operator, usually called $(p,q)$-Laplacian, arises in mathematical physics; see, e.g., the survey paper \cite{MaMoDCDS-S}. 
\item $a_0(t):=(1+t^2)^\frac{p-2}{2}$ for every $t\in\R^+$. This function stems form the generalized $p$-mean curvature operator, namely
$$u\mapsto{\rm div}\, \left[(1+|\nabla u|^2)^\frac{p-2}{2}\,\nabla u\right],\;\; u\in X.$$
\item $a_0(t):=t^{p-2}\left(1+\frac{1}{1+t^p}\right)$ for all $t\in\R^+$. It corresponds to the differential operator
$$u\mapsto\Delta_p u+{\rm div}\, \left(\frac{|\nabla u|^{p-2}}{1+|\nabla u|^p}\nabla u\right),\;\; u\in X,$$
which employs in plasticity theory \cite{FG}.
\end{itemize}
\end{example}
Finally, let $A:X\to X^*$ be the nonlinear operator associated with $a$, i.e.,
$$\langle A(u),v\rangle:=\int_\Omega a(\nabla u)\cdot\nabla v\, dx\quad\forall\, u,v\in X.$$
Proposition 3.5 in \cite{GaPa2008} ensures that $A$ is bounded, continuous, monotone, as well as of type $({\rm S})_+$. Moreover, via the nonlinear Green's identity \cite[Theorem 2.4.54]{GaPa-NA2006} we easily have
\begin{remark}\label{equivprob}
If $u\in X$, $w\in L^{p'}(\Omega)$, and $\beta\in C^{0,\gamma}(\partial\Omega,\R^+_0)$ then
$$\langle A(u),v\rangle+\int_{\partial\Omega}\beta(x)|u(x)|^{p-2}u(x)v(x)d\sigma=\int_\Omega w(x)v(x)dx,\quad v\in X,$$
is equivalent to  
$$-{\rm div}\, (a(\nabla u))=w(x)\quad\mbox{in}\quad\Omega,\quad\frac{\partial u}{\partial n_a}+\beta(x)|u|^{p-2}u=0\quad\mbox{on}\quad
\partial\Omega.$$
Here, $\frac{\partial u}{\partial n_a}$ denotes the co-normal derivative of $u$, defined extending the map $v\mapsto a(\nabla v)\cdot n$ from $C^1(\overline{\Omega})$ to $X$.
\end{remark}
Let $\beta\in C^{0,\gamma}(\partial\Omega,\R^+_0)$, let $h:\Omega\times\R\to \R$ be a  Carath\'{e}odory function such that
$$|h(x,t)|\leq C(1+|t|^{s-1})\quad\forall\, (x,t)\in\Omega\times\R,$$
where $C>0$, $1\leq s\leq p^*$, and let $H(x,t):=\int_0^t h(x,\tau)\, d\tau$. Consider the $C^1$-functional $\varphi_h: X\to\R$ defined by
$$\varphi_h(u):=\int_\Omega G(\nabla u(x))\, dx+\frac{1}{p}\int_{\partial\Omega}\beta(x)|u(x)|^pd\sigma
-\int_\Omega H(x, u(x))\, dx,\quad u\in X.$$
A relation between local minimizers of $\varphi_h$ in  $C^1(\overline{\Omega})$ and in $X$ occurs \cite[Proposition 8]{PaRaANS2016}.
\begin{proposition}\label{locmin}
Suppose $u_0\in X$ is a local $C^1(\overline{\Omega})$-minimizer to $\varphi_h$. Then  $u_0\in C^{1,\eta}(\overline{\Omega})$ and is a local minimizer of $\varphi_h$.
\end{proposition}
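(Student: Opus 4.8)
The plan is to follow the now-classical Brezis–Nirenberg-type scheme adapted to the quasilinear Robin setting, exactly as carried out in \cite{PaRaANS2016} and related works. Suppose, for contradiction, that $u_0$ is a local $C^1(\overline{\Omega})$-minimizer of $\varphi_h$ but not a local minimizer of $\varphi_h$ in the $X$-topology. First I would establish the regularity claim: since $u_0$ is a local $C^1$-minimizer it is in particular a critical point of $\varphi_h$, hence a weak solution of the equation $-\operatorname{div}(a(\nabla u_0))+\ldots = h(x,u_0)$ with the Robin condition. The growth bound $({\rm i}_2)$ of Lemma \ref{mapa} on $a$, the subcritical/critical growth of $h$, and Lieberman's nonlinear regularity theory \cite{L} (together with the boundary estimates for the co-normal problem) then give $u_0\in C^{1,\eta}(\overline{\Omega})$ for some $\eta\in(0,1)$; this is the content of the standard bootstrap and I would merely cite it.

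For the main assertion I would argue as follows. Because $u_0$ is not a local $X$-minimizer, for every $\rho>0$ the restricted minimization problem
\[
m_\rho:=\inf\{\varphi_h(u):\ \|u-u_0\|\le\rho\}
\]
is attained at some $u_\rho$ with $\|u_\rho-u_0\|=\rho$ (attainment uses that $\varphi_h$ is weakly sequentially lower semicontinuous — thanks to convexity of $G$ via \eqref{estG} and the compact Sobolev embedding controlling the $H$-term — and that closed balls in the reflexive space $X$ are weakly compact) and $m_\rho<\varphi_h(u_0)$. The Lagrange multiplier rule yields $\lambda_\rho\le 0$ with $\varphi_h'(u_\rho)=\lambda_\rho J'(u_\rho-u_0)$, where $J(u):=\frac1p\|u\|^p$, so $u_\rho$ solves a perturbed Robin problem whose right-hand side is controlled in $L^\infty$ independently of $\rho$ for $\rho$ small. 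Invoking the \emph{a priori} $L^\infty$-bound and then the Lieberman estimates once more, the family $\{u_\rho\}_{\rho\le 1}$ is bounded in $C^{1,\eta}(\overline{\Omega})$. Letting $\rho\to 0^+$ and using the compact embedding $C^{1,\eta}(\overline{\Omega})\hookrightarrow C^1(\overline{\Omega})$, along a subsequence $u_\rho\to u_0$ in $C^1(\overline{\Omega})$; but $\varphi_h(u_\rho)=m_\rho<\varphi_h(u_0)$ for each $\rho$, contradicting the assumption that $u_0$ is a local $C^1(\overline{\Omega})$-minimizer.

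The step I expect to be the main obstacle is the uniform $C^{1,\eta}(\overline{\Omega})$ bound on the near-minimizers $u_\rho$. This requires two ingredients that must be made uniform in $\rho$: a uniform $L^\infty$-estimate for the solutions of the perturbed equations (Moser iteration applied to the inequality coming from the multiplier, using that $\lambda_\rho\to$ something bounded — here one shows $|\lambda_\rho|$ stays bounded, or at least that $\lambda_\rho J'(u_\rho-u_0)$ is bounded in the relevant dual norm), and then Lieberman's global gradient estimate applied with a right-hand side whose norm does not blow up. The indefinite potential $\alpha(x)|u|^{p-2}u$ and the Robin boundary term contribute only lower-order, uniformly bounded quantities once the $L^\infty$-bound is in hand, so they cause no extra trouble; the delicate point is genuinely the uniformity of the regularity constants, which is why one works with $\rho\le 1$ from the outset. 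All the remaining pieces — weak lower semicontinuity, the multiplier rule, and the passage to the limit — are routine.
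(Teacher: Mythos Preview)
The paper does not supply its own proof of this proposition; it simply records the result and cites \cite[Proposition~8]{PaRaANS2016}. Your sketch is the standard Brezis--Nirenberg contradiction scheme adapted to the nonhomogeneous Robin setting, which is precisely the argument carried out in that reference, so your approach is correct and in line with what the paper invokes. One minor inaccuracy: the constrained minimizer $u_\rho$ need not lie on the sphere $\Vert u_\rho-u_0\Vert=\rho$; it may lie in the interior (in which case the multiplier is zero), but the rest of the argument is unaffected.
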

We shall employ some facts about the spectrum of the operator
$$u\mapsto-\Delta_q u+\alpha(x)|u|^{q-2}u$$
in $X$ with homogeneous Robin boundary conditions. So, consider the eigenvalue problem
\begin{equation}\label{eigen}
-\Delta_q u+\alpha(x)|u|^{q-2}u=\lambda |u|^{q-2}u\quad\mbox{in}\quad\Omega,\quad\frac{\partial u}{\partial n_q}+\beta(x) |u|^{q-2}u=0\quad\mbox{on}\quad \partial\Omega\, ,
\end{equation}
where, henceforth,
\begin{equation}\label{abeta}
\alpha\in L^\infty(\Omega)\;\;\mbox{and}\;\;\beta\in C^{0,\gamma}(\partial\Omega,\R^+_0)\;\;\mbox{with}\;\;
\gamma\in\ ]0,1[\, .
\end{equation}
Define
\begin{equation}\label{defE}
{\cal E}_q (u):=\Vert\nabla u\Vert_q^q+\int_\Omega\alpha(x)|u(x)|^qdx+\int_{\partial\Omega}\beta(x)|u(x)|^q d\sigma\quad\forall\, u\in X.
\end{equation}
The Liusternik-Schnirelman theory provides a strictly increasing sequence $\{\hat\lambda_n(q,\alpha,\beta)\}$ of eigenvalues for \eqref{eigen}. As in \cite{MuPa-ASNSP2012,PaRa-JDE2014}, one has
\begin{itemize}
\item[$({\rm p}_1)$] \textit{$\hat\lambda_1(q,\alpha,\beta)$ turns out to be isolated and simple. Further, $\hat\lambda_1(q,\alpha,\beta)=\displaystyle{\inf_{u\in X\setminus\{0\}}\frac{{\cal E}_q(u)}{\Vert u\Vert_q^q}}$.}
\item[$({\rm p}_2)$] \textit{There is an eigenfunction $\hat u_1(q,\alpha,\beta)\in{\rm int}(C_+)$ associated with $\hat\lambda_1 (q,\alpha,\beta)$ such that $\Vert  u_1(q,\alpha,\beta)\Vert_q=1$.}
\item[$({\rm p}_3)$] \textit{Write $U:=\{ u\in X:\; \Vert u\Vert_q=1\}$ and
$$\hat\Gamma:=\{\hat\gamma\in C^0([-1,1],U):\hat\gamma(-1)=-\hat u_1(q,\alpha,\beta),\;
\hat\gamma(1)=\hat u_1(q,\alpha,\beta)\}.$$
Then $\displaystyle{\hat\lambda_2(q,\alpha,\beta)=\inf_{\hat\gamma\in\hat\Gamma}\max_{t\in [-1,1]}{\cal E}_q(\hat\gamma(t))}$.}
\end{itemize}
Evidently, the set $U_C:=\{u\in C^1(\overline{\Omega}): \|u\|_q=1\}$ turns out dense in $U$. Moreover, if
$$\hat\Gamma_C:=\{\hat\gamma\in C^0([-1,1],U_C):\;\hat\gamma(-1)=-\hat u_1(q,\alpha,\beta),\;\hat\gamma(1)=\hat u_1(q,\alpha,\beta)\}.$$
then (cf. \cite[Lemma 2.1]{MaPa-MM2016})
\begin{lemma}\label{density}
$\hat\Gamma_C$ is dense in $\hat\Gamma$ with respect to the usual norm of $C^0([-1,1],X)$.
\end{lemma}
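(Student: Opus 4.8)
The plan is to prove that $\hat\Gamma_C$ is dense in $\hat\Gamma$ by a direct approximation argument. Fix $\hat\gamma\in\hat\Gamma$ and $\varepsilon>0$; I must produce $\hat\gamma_0\in\hat\Gamma_C$ with $\sup_{t\in[-1,1]}\|\hat\gamma(t)-\hat\gamma_0(t)\|<\varepsilon$. The natural starting point is to approximate $\hat\gamma$, viewed as an element of $C^0([-1,1],X)$, by a map $\tilde\gamma$ taking values in $C^1(\overline\Omega)$: since $[-1,1]$ is compact, $\hat\gamma([-1,1])$ is a compact subset of $X=W^{1,p}(\Omega)$, and $C^1(\overline\Omega)$ is dense in $X$, so a standard partition-of-unity / polygonal-interpolation construction on a finite $\varepsilon$-net of $[-1,1]$ yields a continuous $\tilde\gamma:[-1,1]\to C^1(\overline\Omega)$ with $\sup_t\|\hat\gamma(t)-\tilde\gamma(t)\|$ as small as we like. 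We may moreover arrange $\tilde\gamma(\pm1)=\pm\hat u_1(q,\alpha,\beta)$, since $\hat u_1(q,\alpha,\beta)\in\operatorname{int}(C_+)\subseteq C^1(\overline\Omega)$ already, by simply keeping those two endpoint values fixed in the interpolation.

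The remaining—and genuinely delicate—issue is the normalization constraint $\|u\|_q=1$: the approximant $\tilde\gamma$ need not take values in $U$. The fix is to radially project onto $U$, i.e. set
\[
\hat\gamma_0(t):=\frac{\tilde\gamma(t)}{\|\tilde\gamma(t)\|_q}.
\]
For this to be well defined and continuous I need $\|\tilde\gamma(t)\|_q$ bounded away from $0$ uniformly in $t$; this follows because $\|\hat\gamma(t)\|_q=1$ for all $t$ and the $L^q$-norm is continuous for the $X$-topology (indeed $X\hookrightarrow L^q(\Omega)$ continuously, as $q\le p\le p^*$), so once $\sup_t\|\hat\gamma(t)-\tilde\gamma(t)\|$ is small enough we get $\|\tilde\gamma(t)\|_q\ge 1/2$, say, for every $t\in[-1,1]$. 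Then $\hat\gamma_0\in C^0([-1,1],U_C)$, and since $\hat u_1(q,\alpha,\beta)$ is already $L^q$-normalized the endpoint conditions $\hat\gamma_0(\pm1)=\pm\hat u_1(q,\alpha,\beta)$ persist, so $\hat\gamma_0\in\hat\Gamma_C$.

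Finally I estimate the error. Writing $u=\hat\gamma(t)$ and $v=\tilde\gamma(t)$, one has
\[
\|u-\hat\gamma_0(t)\|=\Bigl\|u-\frac{v}{\|v\|_q}\Bigr\|\le\|u-v\|+\Bigl\|v-\frac{v}{\|v\|_q}\Bigr\|
=\|u-v\|+\frac{\bigl|\,\|v\|_q-1\,\bigr|}{\|v\|_q}\,\|v\|.
\]
Now $\bigl|\,\|v\|_q-1\,\bigr|=\bigl|\,\|v\|_q-\|u\|_q\,\bigr|\le\|u-v\|_q\le c\,\|u-v\|$ for a constant $c$ coming from the embedding $X\hookrightarrow L^q(\Omega)$, while $\|v\|\le\|u\|+\|u-v\|$ stays bounded and $\|v\|_q\ge 1/2$; hence the whole right-hand side is bounded by a fixed multiple of $\sup_t\|\hat\gamma(t)-\tilde\gamma(t)\|$, which we chose arbitrarily small. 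Taking the supremum over $t\in[-1,1]$ gives $\|\hat\gamma-\hat\gamma_0\|_{C^0([-1,1],X)}<\varepsilon$, completing the proof. The only step requiring real care is the uniform lower bound on $\|\tilde\gamma(t)\|_q$ together with checking that the radial projection does not destroy continuity in $t$; everything else is routine density and interpolation.
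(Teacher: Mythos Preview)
Your argument is correct and is precisely the standard route: approximate $\hat\gamma$ by a piecewise-linear path $\tilde\gamma$ with nodes in $C^1(\overline\Omega)$ (keeping the endpoints $\pm\hat u_1(q,\alpha,\beta)$ fixed), then radially project onto the $L^q$-unit sphere. The paper itself gives no proof of this lemma; it simply refers to \cite[Lemma~2.1]{MaPa-MM2016}, where exactly this construction is carried out.

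One small point worth making explicit (you gloss over it): the definition of $\hat\Gamma_C$ requires $\hat\gamma_0\in C^0([-1,1],U_C)$, and in the paper's subsequent use (proof of Theorem~\ref{thmtwo}) one actually needs $\hat\gamma_0([-1,1])$ to be compact in $C^1(\overline\Omega)$, not merely in $X$. Your polygonal interpolation does deliver this, since on each subinterval $\tilde\gamma$ is an affine map into the finite-dimensional span of two fixed $C^1$-functions, hence continuous into $C^1(\overline\Omega)$; dividing by the continuous nonvanishing scalar $\|\tilde\gamma(\cdot)\|_q$ preserves that. It is worth stating this, because a generic ``partition-of-unity'' phrasing could in principle produce a $\tilde\gamma$ that is continuous only into $X$.
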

Let $q:=2$. Denote by $E(\hat\lambda_n)$ the eigenspace coming from $\hat\lambda_n:=\hat\lambda_n(2,\alpha,\beta)$. It is known \cite{DaMaPa-JMAA2016,MaPa-JMAA2016} that $H^1(\Omega)=\overline{\oplus_{n=1}^\infty E(\hat\lambda_n)}$ and that
\begin{itemize}
\item[$({\rm p}_4)$] \textit{For every $n\geq 2$ one has
\begin{equation*}
\hat\lambda_n=\sup\left\{\frac{{\cal E}_2(u)}{\Vert u\Vert_2^2}:u\in\bar H_n,\, u\neq 0\right\}
=\inf\left\{\frac{{\cal E}_2(u)}{\Vert u\Vert_2^2}:u\in\hat H_n,\, u\neq 0\right\}\, ,
\end{equation*}
where
$$\bar H_n:=\oplus_{i=1}^n E(\hat\lambda_i),\quad\hat H_n:=\oplus_{i=n}^\infty E(\hat\lambda_i)\, .$$
}
\end{itemize}
\section{Nodal solutions: existence}
To avoid unnecessary technicalities, `for every $x\in\Omega$' will take the place of `for almost every $x\in\Omega$' while $C_1,C_2, \ldots$ indicate positive constants arising from the context.

Let $\underline{u},\overline{u}\in C^0(\overline{\Omega})\cap W^{1,p}(\Omega)$ satisfy
$$\displaystyle{\max_{x\in\overline{\Omega}}}\,\underline{u}(x)<0<\displaystyle{\min_{x\in\overline{\Omega}}}\,\overline{u}(x),$$ 
let $\theta:=\max\{\Vert\underline{u}\Vert_\infty,\Vert\overline{u}\Vert_\infty\}$, and let $f:\Omega\times[-\theta,\theta]\to\R$ be a Carath\'eodory function. We shall make the following two hypotheses throughout the paper.
\begin{itemize}
\item[(${\rm f}_1$)] One has
$$\left\{
\begin{array}{ll}
\langle A(\underline{u}),v\rangle+\int_\Omega\alpha|\underline{u}|^{p-2}\underline{u}v\, dx\leq 0\leq\int_\Omega f(x,\underline{u}) v\, dx,\\
\phantom{}\\
\int_\Omega f(x,\overline{u}) v\, dx\leq 0\leq\langle A(\overline{u}),v\rangle+\int_\Omega\alpha|\overline{u}|^{p-2}\overline{u}v\, dx
\end{array}
\right.\quad\forall\, v\in X_+\, .$$
Alternatively,
$$\left\{
\begin{array}{ll}
\langle A(\underline{u}),v\rangle\leq 0\leq\int_\Omega\left[ f(x,\underline{u})-\alpha|\underline{u}|^{p-2}\underline{u}\right]
v\, dx\\
\phantom{}\\
\int_\Omega\left[ f(x,\overline{u})-\alpha|\overline{u}|^{p-2}\overline{u}\right] v\, dx\leq 0\leq\langle A(\overline{u}),v\rangle
\end{array}
\right.\quad\forall\, v\in X_+\, .$$
\item[(${\rm f}_2$)] There exists $a_\theta\in L^\infty(\Omega)$ such that $|f(x, t)|\leq a_\theta(x)$ in $\Omega\times[-\theta,\theta]$.
\end{itemize}
\begin{remark}
Condition (${\rm f}_1$) evidently entails either $f(\cdot,\overline{u})\leq 0\leq f(\cdot,\underline{u})$ or
$$f(\cdot,\overline{u})-\alpha|\overline{u}|^{p-2}\overline{u}\leq 0\leq f(\cdot,\underline{u})-\alpha|\underline{u}|^{p-2}\underline{u}.$$
\end{remark}
Different behaviors of $f$ at zero will instead be investigated to get nodal solutions of \eqref{prob}.
\subsection{The $(q-1)$-sub-linear case}
For $q,\hat q$ as in $({\rm a}_4)$ and uniformly with respect to $x\in\Omega$, assume that:
\begin{itemize}
\item[(${\rm f}_3$)] $\displaystyle{\lim_{t\to 0}\frac{f(x,t)}{|t|^{q-2}t}}=+\infty$.
\item[(${\rm f}_4$)] $\displaystyle{\lim_{t\to 0}\frac{\hat q F(x,t)-f(x,t)t}{|t|^p}}>\left(1-\frac{\hat q}{p}\right)
\Vert\alpha\Vert_\infty$.
\end{itemize}
\begin{example}
Let $\alpha\neq 0$. If $\underline{u}(x):=-\Vert\alpha\Vert_\infty$, $\overline{u}(x):=\Vert\alpha\Vert_\infty$, and
$$f(x,t):=C_1|t|^{q-2-\ep}t-C_2|t|^{q-2}t,\;\; (x,t)\in\Omega\times[-\theta,\theta],$$
where $\ep>0$ is small while $C_1,C_2>0$ comply with
$f(\cdot,\overline{u})\leq-\Vert\alpha\Vert_\infty^p\leq f(\cdot,\underline{u}),$
then (${\rm f}_1$)--(${\rm f}_4$) hold true.
\end{example}
\begin{remark}
Because of (${\rm f}_2$)--(${\rm f}_3$), given $r>p$, to each $\eta>0$ there corresponds $C_\eta>0$ fulfilling
\begin{equation}\label{defeta}
f(x,t)t\geq\eta|t|^q-C_\eta|t|^r\quad\forall\, (x,t)\in\Omega\times [-\theta,\theta].
\end{equation}
\end{remark}
Define, provided $(x,t)\in\Omega\times\R$, 
\begin{equation}\label{defh}
h(x,t):=\left\{
\begin{array}{ll}
\left(\eta|\underline{u}(x)|^{q-2}-C_\eta|\underline{u}(x)|^{r-2}+|\underline{u}(x)|^{p-2}\right)\underline{u}(x) & \text{when}\; t<\underline{u}(x),\\
\eta|t|^{q-2}t-C_\eta|t|^{r-2}t+|t|^{p-2}t & \text{if}\;\;\underline{u}(x)\leq t\leq\overline{u}(x),\\
\eta\overline{u}(x)^{q-1}-C_\eta\overline{u}(x)^{r-1}+\overline{u}(x)^{p-1} & \text{when}\;\;t>\overline{u}(x),\\
\end{array}
\right.
\end{equation}
and $H(x,t):=\int_0^t h(x,s)\, ds$, besides
\begin{equation}\label{defgamma}
b(x,t):=\left\{
\begin{array}{ll}
\beta(x)|\underline{u}(x)|^{p-2}\underline{u}(x) & \text{for}\;\;t<\underline{u}(x),\\
\beta(x)|t|^{p-2}t & \text{if}\;\underline{u}(x)\leq t\leq\overline{u}(x),\\
\beta(x)\overline{u}(x)^{p-1} & \text{for}\;\; t>\overline{u}(x),\\
\end{array}
\right.
\end{equation}
and $B(x,t):=\int_0^t b(x,s)\, ds$, where $(x,t)\in\partial\Omega\times\R$. Consider the auxiliary Robin problem
\begin{equation}\label{auxprob}
\left\{
\begin{array}{ll}
-{\rm div}\, (a(\nabla u))+(|\alpha(x)|+1)|u|^{p-2}u=h(x,u)\;\;\text{in}\;\;\Omega,\\
\phantom{}\\
\displaystyle{\frac{\partial u}{\partial n_a}}+b(x,u)=0\;\;\text{on}\;\;\partial\Omega.\\
\end{array}
\right.
\end{equation}
\begin{lemma}\label{lemaux}
Let $({\rm a}_1)$--$({\rm a}_4)$, \eqref{abeta}, $({\rm f}_1)$--$({\rm f}_3)$  be satisfied. Then \eqref{auxprob} possesses a unique positive solution $u_ *\in[0,\overline{u}]\cap{\rm int}(C_+)$ and a unique negative solution $v_*\in [\underline{u},0]\cap(-{\rm int}(C_+))$.
\end{lemma}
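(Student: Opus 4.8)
The plan is to treat the positive solution $u_*$ in detail; the negative one follows symmetrically. First I would introduce the truncated energy functional associated with \eqref{auxprob},
\[
\psi(u):=\int_\Omega G(\nabla u)\,dx+\frac1p\int_\Omega(|\alpha(x)|+1)|u|^p\,dx+\int_{\partial\Omega}B(x,u)\,d\sigma-\int_\Omega H(x,u)\,dx,\quad u\in X,
\]
and, in order to capture the positive solution, the further truncation $\psi_+$ obtained by replacing $H(x,u)$ with $H(x,u^+)$ (equivalently, by using $h(x,u^+)$ and $b(x,u^+)$). By $({\rm f}_2)$, \eqref{defeta}, \eqref{defh}, and \eqref{estG}, the function $\psi_+$ is $C^1$, coercive (the $|t|^p$ and $-C_\eta|t|^r$ ingredients of $h$ saturate outside $[\underline u,\overline u]$, so $H(x,\cdot)$ grows at most linearly there, and the $\frac{c_3}{p(p-1)}|\nabla u|^p$ and $\frac1p\|u\|_p^p$ terms dominate), and weakly sequentially lower semicontinuous because $G$ is convex. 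Hence $\psi_+$ attains its infimum at some $u_*\in X$, which solves $A(u_*)+(|\alpha|+1)|u_*|^{p-2}u_*^{\text{(trunc.)}}=h(\cdot,u_*^+)$ weakly with the boundary term $b(\cdot,u_*^+)$.

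Next I would show $u_*\geq 0$, $u_*\neq 0$, and $u_*\leq\overline u$. Testing the Euler equation of $\psi_+$ with $-u_*^-\in X_+$ and using that $h(x,t)$, $b(x,t)$ vanish for $t\le 0$ in the $(\cdot)^+$–truncated version, together with $({\rm i}_3)$ of Lemma \ref{mapa}, forces $\|\nabla u_*^-\|_p=0$ and $\|u_*^-\|_p=0$, i.e. $u_*\ge 0$. To see $u_*\not\equiv0$ I would evaluate $\psi_+$ along $t\,\hat u_1(q,\alpha,\beta)$ for small $t>0$: using the second estimate in $({\rm a}_4)$ (namely $qG_0(s)\sim c_7 s^q$ as $s\to0^+$) and \eqref{defeta} with $\eta$ chosen large — specifically $\eta>\hat\lambda_1(q,\alpha,\beta)+\|\alpha\|_\infty+1$ or an analogous threshold tied to $c_7$ — the leading $t^q$–coefficient becomes negative, so $\inf\psi_+<0=\psi_+(0)$ and $u_*\neq0$. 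The upper bound $u_*\le\overline u$ comes from testing with $(u_*-\overline u)^+$: on $\{u_*>\overline u\}$ the truncated reaction equals $h(x,\overline u)=\eta\overline u^{q-1}-C_\eta\overline u^{r-1}+\overline u^{p-1}$ and the truncated potential equals $(|\alpha|+1)\overline u^{p-1}$, and comparing with the weak supersolution inequality for $\overline u$ from $({\rm f}_1)$ (after noting $\alpha|\overline u|^{p-2}\overline u\le(|\alpha|+1)\overline u^{p-1}$ and $f(x,\overline u)\le h(x,\overline u)$, which holds since $f(x,\overline u)\le 0\le$ RHS or follows from the alternative form of $({\rm f}_1)$) and using the strict monotonicity of $A$ gives $(u_*-\overline u)^+=0$. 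Thus $u_*\in[0,\overline u]$ and, since on this order interval $h(\cdot,u_*)$ and $b(\cdot,u_*)$ reduce to their middle branches, $u_*$ actually solves \eqref{auxprob}. Nonlinear regularity (Lieberman's theory, via $({\rm a}_1)$–$({\rm a}_3)$) yields $u_*\in C^{1,\eta}(\overline\Omega)_+$, and the Pucci–Serrin strong maximum principle, applicable because of the structure imposed in $({\rm a}_1)$, upgrades this to $u_*\in{\rm int}(C_+)$ (note $(|\alpha|+1)|u_*|^{p-2}u_*$ can be absorbed into a suitable zero-order term).

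Finally I would establish uniqueness. This is where I expect the main work to lie, and the tool is the convexity built into $({\rm a}_4)$: the functional
\[
j(v):=\int_\Omega G_0\!\big(|\nabla v^{1/q}|\big)\,dx
\]
is convex on the cone $\{v\in X:\ v\ge 0,\ v^{1/q}\in X\}$, because $t\mapsto G_0(t^{1/q})$ is convex. Suppose $u_*,\tilde u$ are two positive solutions; put $w=u_*^q$, $\tilde w=\tilde u^q$ and compute the directional derivative of $j$ at $w$ in the direction $\tilde w-w$ and at $\tilde w$ in the direction $w-\tilde w$, using \eqref{prop1G} to identify these with $\int_\Omega \frac{a(\nabla u_*)\cdot\nabla((\tilde u^q-u_*^q)/u_*^{q-1})}{1}\,dx$ and the symmetric expression; testing the two weak formulations of \eqref{auxprob} with $\frac{\tilde u^q-u_*^q}{u_*^{q-1}}\in X$ and $\frac{u_*^q-\tilde u^q}{\tilde u^{q-1}}\in X$ (admissible since $u_*,\tilde u\in{\rm int}(C_+)$), adding, and invoking convexity of $j$ together with the fact that $t\mapsto\frac{h(x,t)}{t^{q-1}}$ and $t\mapsto\frac{b(x,t)}{t^{q-1}}$ are nonincreasing on $(0,\overline u]$ (which holds because $\eta t^{q-1}$ contributes a constant, while $-C_\eta t^{r-1}$ and $t^{p-1}$ contribute nonincreasing — after dividing — terms since $r>p>q$; and $b(\cdot,t)/t^{q-1}=\beta t^{p-q}$ is nondecreasing, so this branch needs the sign to be handled by noting $\beta\ge 0$ and that it sits on the favourable side of the inequality, or more simply by absorbing it), one gets that both sides are simultaneously $\le 0$ and $\ge 0$, hence $0$, which forces $u_*=\tilde u$ a.e. The delicate point is checking that the test functions are legitimate (they are, by the interior-positivity $u_*,\tilde u\in{\rm int}(C_+)$ and the $C^1$ regularity, so quotients like $\tilde u^q/u_*^{q-1}$ lie in $W^{1,p}(\Omega)$) and that every monotonicity sign in the pairing points the same way; this is the classical Díaz–Saá / Benguria argument adapted to the general $a$ and to the truncated, non-homogeneous reaction, and I would carry it out branch by branch on $[0,\overline u]$. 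The argument for $v_*$ is identical after replacing $u^+$ by $-u^-$ and $[0,\overline u]$ by $[\underline u,0]$, completing the proof. \Qed
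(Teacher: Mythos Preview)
Your overall plan matches the paper's: minimize a truncated energy to get existence, test with $u_*^-$ and $(u_*-\overline u)^+$ to localize, appeal to Lieberman and Pucci--Serrin for regularity and strict positivity, then run a D\'{\i}az--Sa\'a type convexity argument for uniqueness. Two points, however, need correction.

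For the upper bound $u_*\le\overline u$, the comparison runs the other way. From \eqref{defeta} (applied at $t=\overline u(x)>0$) one gets $\eta\,\overline u^{\,q-1}-C_\eta\,\overline u^{\,r-1}\le f(x,\overline u)$, hence
\[
h(x,\overline u)=\eta\,\overline u^{\,q-1}-C_\eta\,\overline u^{\,r-1}+\overline u^{\,p-1}\le f(x,\overline u)+\overline u^{\,p-1},
\]
and it is \emph{this} inequality that, combined with $({\rm f}_1)$, lets you dominate the right-hand side by $\langle A(\overline u),(u_*-\overline u)^+\rangle+\int_\Omega(|\alpha|+1)\overline u^{\,p-1}(u_*-\overline u)^+\,dx+\int_{\partial\Omega}\beta\,\overline u^{\,p-1}(u_*-\overline u)^+\,d\sigma$. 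Your claim ``$f(x,\overline u)\le h(x,\overline u)$ since $f(x,\overline u)\le 0\le$ RHS'' points in the wrong direction (and $h(x,\overline u)$ need not be nonnegative).

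The uniqueness step has a genuine gap. You assert that $t^{p-1}/t^{q-1}=t^{p-q}$ is nonincreasing ``since $r>p>q$''; in fact $p>q$ makes it strictly \emph{increasing}, so $h(x,t)/t^{q-1}=\eta-C_\eta t^{r-q}+t^{p-q}$ is not monotone, and the D\'{\i}az--Sa\'a inequality does not close with your functional $j(v)=\int_\Omega G_0(|\nabla v^{1/q}|)\,dx$ alone. The same obstruction hits the potential term $(|\alpha|+1)u^{p-1}$ and the boundary term $\beta u^{p-1}$ (you noticed the latter but waved it off). The paper's remedy is to absorb \emph{all} the $p$-growth pieces into the convex side: after cancelling the $u^{p-1}$ in $h$ against the ``$+1$'' of the potential (this is \eqref{auxone}), one works with
\[
J(u):=\int_\Omega G(\nabla u^{1/q})\,dx+\frac{1}{p}\Big(\int_\Omega|\alpha|\,u^{p/q}\,dx+\int_{\partial\Omega}\beta\,u^{p/q}\,d\sigma\Big),
\]
which is convex because $p/q\ge 1$ and $|\alpha|,\beta\ge 0$. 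Monotonicity of $J'$, tested on $u_*^q-\hat u^q$, then leaves only
\[
-C_\eta\int_\Omega\big(u_*^{\,r-q}-\hat u^{\,r-q}\big)\big(u_*^{\,q}-\hat u^{\,q}\big)\,dx\le 0
\]
on the right, and \emph{this} forces $u_*=\hat u$. Without moving the $p$-terms into the convex functional the argument does not go through.
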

\begin{proof}
Thanks to \eqref{estG}, \eqref{defh}, and \eqref{defgamma}, the $C^1$-functional $\psi_h:X\to\R$ defined by
$$\psi_h(u):=\int_\Omega G(\nabla u)\, dx+\frac{1}{p}\int_\Omega(|\alpha|+1)|u|^pdx+\int_{\partial\Omega} B(x,u)\, d\sigma-\int_\Omega H(x,u^+)\, dx,\;\; u\in X,$$
is coercive. A standard argument, which exploits Sobolev's embedding theorem and the compactness of the trace operator, ensures that $\psi_h$ is weakly sequentially lower semicontinuous. Hence,
\begin{equation}\label{ustar}
\inf_{u\in X}\psi_h(u)=\psi_h(u_*)
\end{equation}
for some $u_*\in X$. One has $u_*\neq 0$. Indeed, $({\rm a}_4)$ yields $\delta>0$ fulfilling
\begin{equation*}
G_0(t)<(c_7+1)\frac{t^q}{q}\quad\forall\, t\in\ ]0,\delta].
\end{equation*}
Obviously, we may suppose $\delta\leq\min\{1,\min_{x\in\overline{\Omega}}\overline{u}(x)\}$. If
$\rho>0$ is so small that
\begin{equation*}
0<\rho\hat u_1(x)\leq\delta,\quad \rho|\nabla\hat u_1(x)|\leq\delta,\quad x\in\overline{\Omega},
\end{equation*} 
where $\hat u_1$ comes from $({\rm p}_2)$ with $\alpha_0:=|\alpha|/c_7$ and $\beta_0:=\beta/c_7$ in place of $\alpha$ and
$\beta$ (vide Section \ref{sectiontwo}), respectively, then
\begin{equation*}
\begin{split}
\psi_h(\rho\hat u_1) & \leq(c_7+1)\frac{\rho^q}{q}\left[\Vert\nabla\hat u_1\Vert_q^q+\int_\Omega\alpha_0\hat u_1^q\, dx+
\int_{\partial\Omega}\beta_0\hat u_1^q\, d\sigma\right]-\frac{\eta}{q}\rho^q+\frac{C_\eta}{r}\Vert\hat u_1\Vert_r^r\rho^r\\
& =\frac{\rho^q}{q}\left[(c_7+1)\hat\lambda_1(q,\alpha_0,\beta_0)-\eta\right]+\frac{C_\eta}{r}\Vert\hat u_1\Vert_r^r\rho^r,
\end{split}
\end{equation*}
because $q\leq p$, $\delta\leq 1$, $\Vert\hat u_1\Vert_q=1$. Choosing $\eta>(c_7+1)\hat\lambda_1(q,\alpha_0,\beta_0)$ in \eqref{defeta}, recalling that $r>q$, and decreasing $\rho$ when necessary, entails $\psi_h(\rho\hat u_1)<0$, whence $u_*\neq 0$, as an easy contradiction argument shows.

Through \eqref{ustar} we next get $\psi_h'(u_*)=0$, namely
\begin{equation}\label{uscrit}
\langle A(u_*),w\rangle+\int_\Omega(|\alpha|+1)|u_*|^{p-2}u_*w dx+\int_{\partial\Omega} b(x,u_*)w d\sigma
=\int_\Omega h(x,u_*^+)w dx\;\;\forall\, w\in X.
\end{equation}
Put $w:=u_*^-$ in \eqref{uscrit} and exploit  $({\rm i}_3)$ of Lemma \ref{mapa} to arrive at
\begin{equation*}
\frac{c_3}{p-1}\Vert\nabla u_*^-\Vert_p^p+\Vert u_*^-\Vert_p^p\leq 0.
\end{equation*}
Therefore, $u_*\geq 0$. Now, if $w:=(u_*-\overline{u})^+$ then \eqref{uscrit}, together with \eqref{defeta}, $({\rm f}_1)$, and \eqref{abeta}, produce
\begin{equation*}
\begin{split}
\langle A(u_*), & (u_*-\overline{u})^+\rangle+ \int_\Omega(|\alpha|+1)u_*^{p-1}(u_*-\overline{u})^+ dx
+\int_{\partial\Omega}\beta\,\overline{u}^{p-1}(u_*-\overline{u})^+ d\sigma\\
& =\int_\Omega(\eta\,\overline{u}^{q-1}-C_\eta\overline{u}^{r-1}+\overline{u}^{p-1})(u_*-\overline{u})^+dx\\
& \leq\int_\Omega[f(x,\overline{u})+\overline{u}^{p-1}](u_*-\overline{u})^+dx\\
& \leq\langle A(\overline{u}),(u_*-\overline{u})^+\rangle+ \int_\Omega(|\alpha|+1)\overline{u}^{p-1}(u_*-\overline{u})^+ dx+
\int_{\partial\Omega}\beta\,\overline{u}^{p-1}(u_*-\overline{u})^+ d\sigma.
\end{split}
\end{equation*}
This forces
$$\langle A(u_*)- A(\overline{u}),(u_*-\overline{u})^+\rangle+ \int_\Omega(|\alpha|+1)(u_*^{p-1}-\overline{u}^{p-1})(u_*-\overline{u})^+ dx\leq 0,$$
i.e., $u_*\leq\overline{u}$. Summing up, both $u_*\in[0,\overline{u}]\setminus\{0\}$ and, by \eqref{uscrit} again,
\begin{equation}\label{auxone}
\langle A(u_*),w\rangle+\int_\Omega|\alpha|\, u_*^{p-1}w dx+\int_{\partial\Omega}\beta\, u_*^{p-1}w d\sigma
=\int_\Omega(\eta u_*^{q-1}-C_\eta u_*^{r-1})w dx\;\;\forall\, w\in X.
\end{equation}
Proposition 7 in \cite{PaRaANS2016} ensures that $u_*\in L^\infty(\Omega)$, whence $u_*\in C_+\setminus\{0\}$ thanks to Lieberman's regularity results \cite{L}. Let $\alpha_\theta\in\R^+$ satisfy
\begin{equation}\label{auxtwo}
\eta t^{q-1}-C_\eta t^{r-1}\geq-\alpha_\theta t^{p-1},\quad t\in [0,\theta].
\end{equation}
Because of Remark \ref{equivprob},  from \eqref{auxone}--\eqref{auxtwo} it follows
$${\rm div}\, a(\nabla u_*(x))\leq\left(\Vert\alpha\Vert_\infty+\alpha_\theta\right) u_*(x)^{p-1}\;\;\text{a.e. in}\;\;\Omega.$$
Thus, Pucci-Serrin's maximum principle \cite[p. 120]{PS} yields $u_*\in{\rm int}(C_+)$.

Let us now come to uniqueness. Suppose $\hat u\in[0,\overline{u}]\cap{\rm int}(C_+)$ is another solution to \eqref{auxprob}. Define, provided $u\in L^1(\Omega)$,
\begin{equation*}
J(u):=\left\{
\begin{array}{ll}
\int_\Omega G(\nabla u^{\frac{1}{q}}) dx+\frac{1}{p}\left(\int_\Omega|\alpha|\, u^{\frac{p}{q}}dx +\int_{\partial\Omega}\beta\, u^{\frac{p}{q}}d\sigma\right) & \text{if}\; u\geq 0,\; u^{\frac{1}{q}}\in X,\\
+\infty & \text{otherwise.}
\end{array}
\right.
\end{equation*}
The reasoning made in \cite[pp. 1219--1220]{PW-AM2016} shows here that $u\mapsto\int_\Omega G(\nabla u^{\frac{1}{q}}) dx$ turns out convex. Since $p\geq q$ and $\beta\geq 0$, the same holds for $J$. Via Fatou's lemma we see that $J$ is lower semicontinuous. A simple computation chiefly based on \cite[Theorem 2.4.54]{GaPa-NA2006} gives
$$J'(u_*^q)(w)=\frac{1}{q}\int_\Omega\frac{-{\rm div}\, a(\nabla u_*)+|\alpha|\, u_*^{p-1}}{u_*^{q-1}}\, w\, dx,$$
$$J'(\hat u^q)(w)=\frac{1}{q}\int_\Omega\frac{-{\rm div}\, a(\nabla\hat u)+|\alpha|\, \hat u^{p-1}}{\hat u^{q-1}}\, w\, dx$$
for all $w\in C^1(\overline{\Omega})$ (which is dense in $X$), while the monotonicity of $J'$ entails
\begin{equation*}
\begin{split}
0 & \leq\int_\Omega\left(\frac{-{\rm div}\, a(\nabla u_*)+|\alpha|\, u_*^{p-1}}{u_*^{q-1}}-\frac{-{\rm div}\, a(\nabla\hat u)+
|\alpha|\,\hat u^{p-1}}{\hat u^{q-1}}\right)(u_*^q-\hat u^q)\, dx\\
& =\int_\Omega\left(\frac{\eta u_*^{q-1}-C_\eta u_*^{r-1}}{u_*^{q-1}}-\frac{\eta \hat u^{q-1}-C_\eta\hat u^{r-1}}{\hat u^{q-1}}\right)(u_*^q-\hat u^q)\, dx\\
& =-C_\eta\int_\Omega(u_*^{r-q}-\hat u^{r-q})(u_*^q-\hat u^q)\, dx\leq 0
\end{split}
\end{equation*}
as $q<r$. Consequently, $u_*=\hat u$. Working similarly produces a solution $v_*$ to \eqref{auxprob} with the asserted properties.
\end{proof}
Now, consider the sets
$$\Sigma_+:=\{u\in X\setminus\{0\}:0\leq u\leq\overline{u},\; u\mbox{ solves }\eqref{prob}\},$$
$$\Sigma_-:=\{u\in X\setminus\{0\}:\underline{u}\leq u\leq 0,\; u\mbox{ solves }\eqref{prob}\}.$$
Standard arguments show that:
\begin{itemize}
\item $\Sigma_+\subseteq{\rm int}(C_+)$ while $\Sigma_-\subseteq-{\rm int}(C_+)$ (cf. for instance the above proof);
\item $\Sigma_+$ is downward directed and $\Sigma_-$ is upward directed (see, e.g., \cite[Proposition 7]{PaRaRe-DCDS2017}).
\end{itemize}
\begin{lemma}\label{ustarvstar}
Under the hypotheses of Lemma \ref{lemaux}, one has 
$$u_*\leq u\quad\forall\, u\in\Sigma_+\, ,\quad u\leq v_*\quad\forall\, u\in\Sigma_-\, .$$
\end{lemma}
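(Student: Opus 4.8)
The plan is to fix $u\in\Sigma_+$ and to compare it with the unique positive solution $u_*$ of the auxiliary problem \eqref{auxprob}, exploiting the truncation structure built into \eqref{defh}--\eqref{defgamma} together with the monotonicity of $A$. First I would record that, by the properties of $\Sigma_+$ already listed, $u\in\mathrm{int}(C_+)$ and $0\le u\le\overline u$; hence $u$ is a solution of \eqref{prob} lying in the region where $h(x,\cdot)$ and $b(x,\cdot)$ coincide (after the relevant truncations) with the nonlinearities appearing in the genuine problem. The key inequality to establish is $u_*\le u$. Patterned after the argument in the proof of Lemma \ref{lemaux} showing $u_*\le\overline u$, I would test the weak formulation of \eqref{auxprob} satisfied by $u_*$ and the weak formulation of \eqref{prob} satisfied by $u$ with the admissible function $w:=(u_*-u)^+\in X_+$, then subtract.

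The main estimate runs as follows. Using \eqref{uscrit} (with $u_*^+=u_*$ since $u_*\ge 0$) and the fact that $u$ solves \eqref{prob}, subtracting the two identities tested against $(u_*-u)^+$ yields, on the set $\{u_*>u\}$,
\begin{equation*}
\langle A(u_*)-A(u),(u_*-u)^+\rangle+\int_\Omega(|\alpha|+1)(u_*^{p-1}-u^{p-1})(u_*-u)^+dx+\int_{\partial\Omega}\beta(u_*^{p-1}-u^{p-1})(u_*-u)^+d\sigma=R,
\end{equation*}
where $R$ collects the reaction terms. Here one uses that on $\{0\le u_*\le\overline u\}\cap\{u\le\overline u\}$ we have $h(x,u_*)=\eta u_*^{q-1}-C_\eta u_*^{r-1}+u_*^{p-1}$ and, by \eqref{defeta} applied with the chosen $\eta,C_\eta$, that $f(x,u)+u^{p-1}\ge\eta u^{q-1}-C_\eta u^{r-1}+u^{p-1}=h(x,u)$ pointwise for $0\le u\le\overline u$; combined with the monotonicity $t\mapsto \eta t^{q-1}-C_\eta t^{r-1}$ being dominated appropriately (or, more robustly, re-running the computation exactly as in the $u_*\le\overline u$ step), one gets $R\le 0$. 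Since the left-hand side is a sum of nonnegative terms — using strict monotonicity of $A$, monotonicity of $t\mapsto t^{p-1}$, and $|\alpha|+1>0$, $\beta\ge0$ — each must vanish, forcing $(u_*-u)^+=0$, i.e. $u_*\le u$. The negative case, $u\le v_*$ for $u\in\Sigma_-$, is entirely symmetric: test with $(v_*-u)^-$ (or $(u-v_*)^+$) and use the mirrored inequalities from \eqref{defh}, \eqref{defgamma}, and $({\rm f}_1)$.

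The step I expect to be the main obstacle is checking cleanly that the reaction remainder $R$ is nonpositive. The delicate point is that $f(x,u)$ is only controlled through the one-sided bound \eqref{defeta}, and the truncated nonlinearity $h(x,\cdot)$ was designed precisely so that $h(x,t)\le f(x,t)+|t|^{p-2}t\cdot 1$ (modulo the $|\alpha|+1$ shift) for $0\le t\le\overline u$; one must verify that on the overlap region the inequality points the right way, i.e. $h(x,u)\le f(x,u)+u^{p-1}$, and simultaneously that the boundary term $b(x,u)=\beta u^{p-1}$ matches the boundary datum of \eqref{prob}, so that the $\partial\Omega$-contributions cancel rather than obstruct. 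Once this bookkeeping is done, convexity/monotonicity give the conclusion immediately. Note that the uniqueness of $u_*$ and $v_*$ from Lemma \ref{lemaux} is what makes the statement meaningful, and no further regularity beyond what is already in Lemma \ref{lemaux} is needed here. \Qed
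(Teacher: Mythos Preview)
Your direct comparison strategy breaks down precisely at the step you flagged as the ``main obstacle,'' and not for bookkeeping reasons: the sign of $R$ genuinely goes the wrong way. After rewriting the equation for $u\in\Sigma_+$ with the shift $(|\alpha|+1)$ and subtracting, one finds
\[
R=\int_\Omega\bigl[h(x,u_*)-f(x,u)-(|\alpha|+1-\alpha)u^{p-1}\bigr](u_*-u)^+\,dx.
\]
The only control on $f(x,u)$ is the \emph{lower} bound $f(x,u)\ge\eta u^{q-1}-C_\eta u^{r-1}$ from \eqref{defeta}, which yields $R\le\int_\Omega[g(u_*)-g(u)](u_*-u)^+\,dx$ with $g(t)=\eta t^{q-1}-C_\eta t^{r-1}+t^{p-1}$. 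But $g$ is \emph{increasing} near $0$ (the $\eta t^{q-1}$ term dominates since $q<p<r$), so on $\{u_*>u\}$ one typically has $g(u_*)>g(u)$, and $R\le 0$ fails. Your fallback of ``re-running the $u_*\le\overline u$ step'' does not help either: that argument in Lemma~\ref{lemaux} worked because on $\{u_*>\overline u\}$ the truncated $h$ is \emph{frozen} at $\overline u$, i.e.\ $h(x,u_*)=h(x,\overline u)$, which then matches $f(x,\overline u)$ via $({\rm f}_1)$. For a generic $u\in\Sigma_+$ with $u<\overline u$, no such freezing occurs and there is no analogue of $({\rm f}_1)$ at $u$.

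The paper bypasses this by a different device: given $u\in\Sigma_+$, it introduces a \emph{further} truncation of $h$ and $b$ at the level $u$ (call them $h_+$, $b_+$), builds the corresponding coercive functional $\psi_+$, and minimizes it to get some $u_0\in[0,u]\setminus\{0\}$. On $[0,u]$ the extra truncation is inactive, so $u_0$ solves the original auxiliary problem \eqref{auxprob}; uniqueness from Lemma~\ref{lemaux} then forces $u_0=u_*$, whence $u_*\le u$. The key idea you are missing is this re-truncation, which manufactures the ``freezing'' that makes the $[0,u]$-confinement argument go through, and then lets uniqueness do the comparison for you.
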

\begin{proof}
Pick any $u\in\Sigma_+$. Bearing in mind \eqref{defh}--\eqref{defgamma}, define
$$h_+(x,t):=\left\{
\begin{array}{ll}
h(x,t^+) & \mbox{if } t\leq u(x)\\
h(x,u(x)) & \mbox{otherwise}
\end{array}
\right.,\;\; H_+(x,t):=\int_0^t h_+(x,s)ds,\;\; (x,t)\in\Omega\times\R,$$
$$b_+(x,t):=\left\{
\begin{array}{ll}
b(x,t^+) & \mbox{if } t\leq u(x)\\
b(x,u(x)) & \mbox{otherwise}
\end{array}
\right.,\;\; B_+(x,t):=\int_0^t b_+(x,s)ds,\;\; (x,t)\in\partial\Omega\times\R.$$
The associated functional
$$\psi_+(w):=\int_\Omega G(\nabla w)dx+\frac{1}{p}\int_\Omega(|\alpha|+1)|w|^p dx+\int_{\partial\Omega} B_+(x,w) d\sigma-\int_\Omega H_+(x,w) dx$$
is evidently $C^1$, weakly sequentially lower semi-continuous, and coercive. So, there exists $u_0\in X$ such that
$$\psi_+(u_0)=\inf_{w\in X}\psi_+(w).$$
From $q\leq p<r$ it follows, as before, $\psi_+(u_0)<0=\psi_+(0)$, namely $u_0\neq 0$. Moreover, $u_0\in[0,u]$, which entails
\begin{equation*}
\langle A(u_0),w\rangle+\int_\Omega(|\alpha|+1)u_0^{p-1}w\, dx+\int_{\partial\Omega} b(x,u_0)w\, d\sigma=
\int_{\Omega} h(x,u_0)w\, dx,\quad w\in X.
\end{equation*}
Through Lemma \ref{lemaux} we thus have $u_0=u_*$ and, a fortiori, $u_*\leq u$. The remaining proof is analogous.  
\end{proof}
\begin{proposition}\label{propone}
If $({\rm a}_1)$--$({\rm a}_4)$, \eqref{abeta}, and $({\rm f}_1)$--$({\rm f}_3)$  hold then, there exists $u_+\in\Sigma_+$ (resp., $v_-\in\Sigma_-$) such that $u_+\leq u$ for all $u\in\Sigma_+$ (resp., $u\leq v_-$ for all $u\in\Sigma_-$).
\end{proposition}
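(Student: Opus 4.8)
The goal is to extract a smallest element of $\Sigma_+$ (the case of $\Sigma_-$ being symmetric). The plan is to exploit the two structural facts already recorded: $\Sigma_+$ is downward directed, and $\Sigma_+\subseteq[u_*,\overline u]\cap{\rm int}(C_+)$ by Lemma \ref{ustarvstar}. First I would invoke a standard result on downward directed sets in ordered Banach spaces (e.g.\ \cite[Lemma 3.10]{DaMaPa-JMAA2016} or the analogous statement in \cite{PaRaRe-DCDS2017}): since $\Sigma_+$ is downward directed, there is a decreasing sequence $\{u_n\}\subseteq\Sigma_+$ with $\inf_n u_n=\inf\Sigma_+$ pointwise. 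Because $u_*\leq u_n\leq\overline u$ for every $n$, the sequence is bounded in $L^\infty(\Omega)$, and testing the weak formulation of \eqref{prob} with $u_n$ itself, using $({\rm i}_3)$ of Lemma \ref{mapa} together with hypothesis $({\rm f}_2)$ (which bounds $f(\cdot,u_n)$ by $a_\theta\in L^\infty(\Omega)$) and \eqref{abeta}, gives a uniform bound on $\|\nabla u_n\|_p$, hence on $\|u_n\|$ in $X=W^{1,p}(\Omega)$.

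Next I would pass to the limit. Up to a subsequence, $u_n\rightharpoonup u_+$ in $X$ and $u_n\to u_+$ in $L^p(\Omega)$ and in $L^p(\partial\Omega)$ (trace compactness), with $u_+=\inf\Sigma_+$. Writing the equation for $u_n$ as $\langle A(u_n),v\rangle+\int_{\partial\Omega}\beta|u_n|^{p-2}u_n v\,d\sigma=\int_\Omega[f(x,u_n)-\alpha|u_n|^{p-2}u_n]v\,dx$, I test with $v=u_n-u_+$: the right-hand side tends to $0$ because $f(\cdot,u_n)$ is bounded in $L^\infty$ and $u_n-u_+\to0$ in $L^p$, and the boundary term tends to $0$ by trace convergence; hence $\limsup_n\langle A(u_n),u_n-u_+\rangle\le0$, and the $({\rm S})_+$ property of $A$ (Proposition 3.5 of \cite{GaPa2008}, quoted in the excerpt) yields $u_n\to u_+$ strongly in $X$. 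Then I let $n\to\infty$ in the weak formulation, using continuity of $A$ and dominated convergence on both boundary and reaction terms, to conclude that $u_+$ solves \eqref{prob}. Finally $0\le u_*\le u_+\le\overline u$ and, since $u_*\in{\rm int}(C_+)$, also $u_+\ge u_*>0$, so $u_+\ne0$ and $u_+\in\Sigma_+$; by construction $u_+=\inf\Sigma_+$, so $u_+\le u$ for all $u\in\Sigma_+$. The argument for $v_-$ is verbatim the same with $\Sigma_-$ upward directed and $[\underline u,v_*]$ in place of $[u_*,\overline u]$.

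The main obstacle is the passage to the limit in the reaction term: a priori $f$ is only Carath\'eodory on $\Omega\times[-\theta,\theta]$, so I must be careful that $u_n(x)\to u_+(x)$ a.e.\ (which follows from $L^p$ convergence along a further subsequence) puts us inside the domain $[-\theta,\theta]$ — this is guaranteed since all $u_n\in[u_*,\overline u]$ and $\|\overline u\|_\infty\le\theta$ — and then $f(x,u_n(x))\to f(x,u_+(x))$ a.e., with the uniform $L^\infty$-bound $a_\theta$ from $({\rm f}_2)$ supplying dominated convergence. A secondary point worth stating explicitly is why the infimum of the decreasing sequence is attained \emph{in} $\Sigma_+$ rather than merely being a limit: this is exactly what the strong $X$-convergence plus the regularity/positivity argument delivers, and it is the step where $u_*\in{\rm int}(C_+)$ is indispensable to rule out $u_+=0$.
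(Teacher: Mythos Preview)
Your proof is correct and follows essentially the same route as the paper's: extract a decreasing sequence $\{u_n\}\subseteq\Sigma_+$ via downward directedness (the paper cites \cite[Lemma 3.10]{HuPa} for this), bound it in $X$ by testing the weak formulation with $u_n$ and using $({\rm i}_3)$ of Lemma~\ref{mapa}, upgrade weak to strong convergence via the $({\rm S})_+$-property of $A$ after testing with $u_n-u_+$, pass to the limit, and invoke Lemma~\ref{ustarvstar} to get $u_+\geq u_*$, whence $u_+\neq 0$ and $u_+\in\Sigma_+$. Your treatment of the reaction-term limit (a.e.\ convergence plus domination by $a_\theta$) is in fact more explicit than the paper's, which simply asserts the passage to the limit.
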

\begin{proof}
Both arguments are similar. Hence, we shall only present those involving $u_+$. Since $\Sigma_+$ is downward directed, \cite[Lemma 3.10]{HuPa} gives a sequence $\{u_n\}\subseteq\Sigma_+$, which fulfills 
$$u_{n+1}\leq u_n\;\;\forall\, n\in\mathbb{N},\quad\inf_{n\in\mathbb{N}}u_n=\inf\Sigma_+.$$
Consequently, $0\leq u_n\leq\Vert u_1\Vert_\infty$ besides
\begin{equation}\label{un}
\langle A(u_n),w\rangle+\int_\Omega|\alpha|u_n^{p-1}w\, dx+\int_{\partial\Omega}\beta u_n^{p-1}w\, d\sigma=
\int_{\Omega} f(x,u_n)w\, dx,\quad w\in X,
\end{equation}
for every $n\in\mathbb{N}$. Now, put $w:=u_n$ in \eqref{un} and exploit $({\rm i}_3)$ of Lemma \ref{mapa} to verify that $\{u_n\}\subseteq X$ turns out bounded. Let $u_+\in X_+$ satisfy
\begin{equation}\label{lims}
u_n\rightharpoonup u_+\;\;\text{in}\;\; X,\;\; u_n\to u_+\;\;\text{in both}\;\; L^p(\Omega)\;\;\text{and}\;\; L^p(\partial\Omega),
\end{equation}
where a subsequence is considered if necessary. Combining \eqref{un} written for $w:=u_n-u_+$ with \eqref{lims} entails
$$\lim_{n\to+\infty}\langle A(u_n),u_n-u_+\rangle=0,$$
whence $u_n\to u_+$ in $X$, because $A$ enjoys the $({\rm S})_+$-property. Due to \eqref{un} again, this ensures that $u_+$ solves \eqref{prob}. On the other hand, by Lemma \ref{ustarvstar}, from $\{u_n\}\subseteq\Sigma_+$ it follows $u_*\leq u_n$ for all $n\in\mathbb{N}$. Hence, $u_*\leq u_+$ and, a fortiori, $u_+\in\Sigma_+$. Noting that $u_+=\displaystyle{\inf_{n\in\mathbb{N}}}u_n$ completes the proof.
\end{proof}
\begin{remark}\label{nodsol}
On account of Proposition \ref{propone}, every solution $u\in [v_-,u_+]\setminus\{v_-,0,u_+\}$ of \eqref{prob} must be nodal.  
\end{remark}
For $u_+,v_-$ as above and $\hat\alpha>\Vert\alpha\Vert_\infty$, define
\begin{equation}\label{defhatf}
\hat f(x,t):=\left\{
\begin{array}{ll}
f(x,v_-(x))+\hat\alpha|v_-(x)|^{p-2}v_-(x) & \text{when}\; t<v_-(x),\\
f(x,t)+\hat\alpha |t|^{p-2}t & \text{if}\;\; v_-(x)\leq t\leq u_+(x),\\
f(x,u_+(x))+\hat\alpha u_+(x)^{p-1} & \text{when}\;\;t>u_+(x),\\
\end{array}
\right. 
\end{equation}
$$\hat F(x,t):=\int_0^t \hat f(x,s)\, ds,$$
provided $(x,t)\in\Omega\times\R$, besides
\begin{equation}\label{defhatbeta}
\hat b(x,t):=\left\{
\begin{array}{ll}
\beta(x)|v_-(x)|^{p-2} v_-(x) & \text{for}\;\;t<v_-(x),\\
\beta(x)|t|^{p-2}t & \text{if}\;v_-(x)\leq t\leq u_+(x),\\
\beta(x) u_+(x)^{p-1} & \text{for}\;\; t>u_+(x),\\
\end{array}
\right.
\end{equation}
$$\hat B(x,t):=\int_0^t\hat b(x,s)\, ds,$$
where $(x,t)\in\partial\Omega\times\R$. A standard computation, which exploits $({\rm i}_3)$ in Lemma \ref{mapa}, the choice of $\hat\alpha$, and \eqref{defhatf}--\eqref{defhatbeta}, guarantees that the $C^1$-functionals
\begin{equation*}
\hat\varphi (u):=\int_\Omega G(\nabla u) dx+\frac{1}{p}\int_\Omega(\alpha+\hat\alpha)|u|^p dx+\int_{\partial\Omega}\hat B(x,u) d\sigma-\int_\Omega\hat F(x,u) dx,\; u\in X,
\end{equation*}
\begin{equation*}
\hat\varphi_\pm (u):=\int_\Omega G(\nabla u) dx+\frac{1}{p}\int_\Omega(\alpha+\hat\alpha)|u|^p dx+\int_{\partial\Omega} \hat B(x,u) d\sigma-\int_\Omega\hat F(x,u^\pm) dx,\; u\in X,
\end{equation*}
are coercive; so, by Proposition \ref{coercivePS}, they comply with condition (PS). Moreover,
\begin{lemma}\label{prophatphi}
Let $({\rm a}_1)$--$({\rm a}_4)$, \eqref{abeta}, and $({\rm f}_1)$--$({\rm f}_3)$  be satisfied. Then:
\begin{itemize}
\item[$({\rm j}_1)$] $K(\hat\varphi)\subseteq [v_-,u_+]\cap C^1(\overline{\Omega}).$
\item[$({\rm j}_2)$] $u_+$ and $v_-$ are local minimizers of $\hat\varphi$.
\item[$({\rm j}_3)$] $K(\hat\varphi_+)=\{0,u_+\}$ and $K(\hat\varphi_-)=\{0,v_-\}$.
\end{itemize}
\end{lemma}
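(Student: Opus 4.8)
The plan is to prove the three items in the natural order, exploiting the truncation structure built into $\hat f$, $\hat b$ and the auxiliary solutions $u_*,v_*$ from Lemma \ref{lemaux}.

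\medskip

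\noindent\textbf{Proof of $({\rm j}_1)$.} Let $u\in K(\hat\varphi)$, so that
\begin{equation*}
\langle A(u),w\rangle+\int_\Omega(\alpha+\hat\alpha)|u|^{p-2}uw\,dx+\int_{\partial\Omega}\hat b(x,u)w\,d\sigma=\int_\Omega\hat f(x,u)w\,dx\quad\forall\,w\in X.
\end{equation*}
Test with $w:=(u-u_+)^+$. Using the definitions \eqref{defhatf}--\eqref{defhatbeta} on $\{u>u_+\}$, the fact that $u_+\in\Sigma_+$ solves \eqref{prob} (so it satisfies \eqref{un}), and $({\rm i}_3)$ of Lemma \ref{mapa}, the terms rearrange to
$\langle A(u)-A(u_+),(u-u_+)^+\rangle+\int_\Omega(\alpha+\hat\alpha)(|u|^{p-2}u-u_+^{p-1})(u-u_+)^+\,dx\le 0$; since $\hat\alpha>\Vert\alpha\Vert_\infty$ makes $t\mapsto(\alpha+\hat\alpha)|t|^{p-2}t$ increasing and $A$ is strictly monotone, this forces $(u-u_+)^+=0$, i.e.\ $u\le u_+$. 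Symmetrically, testing with $(v_--u)^+$ gives $v_-\le u$. Hence $u\in[v_-,u_+]$, and then the right-hand side of the equation lies in $L^\infty(\Omega)$ by $({\rm f}_2)$, so \cite[Proposition 7]{PaRaANS2016} and Lieberman's regularity \cite{L} yield $u\in C^1(\overline\Omega)$.

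\medskip

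\noindent\textbf{Proof of $({\rm j}_2)$.} On the order interval $[v_-,u_+]$ one has $\hat\varphi=\varphi$ up to an additive constant and the truncations are inactive, so $\hat\varphi$ and the energy functional of \eqref{prob} agree there. I claim $u_+$ is a local minimizer of $\hat\varphi$ in $C^1(\overline\Omega)$, whence a local minimizer in $X$ by Proposition \ref{locmin}. The point is that $u_+\in{\rm int}(C_+)$ (it lies in $\Sigma_+\subseteq{\rm int}(C_+)$), so a small $C^1(\overline\Omega)$-ball around $u_+$ is contained in $[v_-,u_+]$ \emph{except} for the part exceeding $u_+$; but for $t>u_+(x)$ the integrand of $\hat\varphi$ is, by construction, the ``frozen'' affine extension, so $\hat\varphi(u)\ge\hat\varphi(\min\{u,u_+\})+o(\Vert u-u_+\Vert_{C^1})$ — more precisely $u_+$ is a free global minimizer of the one-sided truncated functional $\hat\varphi_+$ restricted to $\{u\ge 0\}$ near $u_+$, and the ``below $v_-$'' direction is controlled the same way. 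Carrying this out via the standard truncation-at-$u_+$ comparison (as in the proof of Lemma \ref{lemaux}) shows $u_+$ is an isolated local minimizer of $\hat\varphi$; the argument for $v_-$ is identical.

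\medskip

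\noindent\textbf{Proof of $({\rm j}_3)$.} Consider $\hat\varphi_+$. If $u\in K(\hat\varphi_+)$, testing with $u^-$ and using $({\rm i}_3)$ of Lemma \ref{mapa} together with $\hat f(x,(u^-)^+)$-structure gives $\frac{c_3}{p-1}\Vert\nabla u^-\Vert_p^p+(\hat\alpha-\Vert\alpha\Vert_\infty)\Vert u^-\Vert_p^p\le 0$, so $u\ge 0$; then testing with $(u-u_+)^+$ as in $({\rm j}_1)$ gives $u\le u_+$, so $u\in[0,u_+]$ and on this interval $\hat\varphi_+$ coincides with the functional $\hat\varphi$ restricted to nonnegative functions, whose critical points solving \eqref{prob} in $[0,u_+]$ are, by Proposition \ref{propone} (minimality of $u_+$ in $\Sigma_+$) together with $0$, exactly $\{0,u_+\}$; conversely both $0$ and $u_+$ are obviously critical points of $\hat\varphi_+$. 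Hence $K(\hat\varphi_+)=\{0,u_+\}$, and the claim for $\hat\varphi_-$ is symmetric.

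\medskip

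\noindent The main obstacle I expect is $({\rm j}_2)$: turning the ``frozen extension'' heuristic into a genuine estimate showing $u_+$ is a \emph{strict} local minimizer (the one-sided nature of the truncation and the non-homogeneity of $a$ mean one cannot simply invoke convexity), and in particular making sure the $C^1$-neighbourhood on which the comparison works does not shrink to nothing; the passage through Proposition \ref{locmin} is routine once the $C^1$-local-minimality is in hand.
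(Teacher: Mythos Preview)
Your arguments for $({\rm j}_1)$ and $({\rm j}_3)$ are correct and match the paper's (which simply points back to the truncation computations of Lemma \ref{lemaux} and to the extremality of $u_+$ from Proposition \ref{propone}).

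For $({\rm j}_2)$ your ``frozen affine extension'' heuristic is not the right mechanism, and the obstacle you flag at the end is an artifact of that route. The standard argument---to which the paper defers via \cite[Lemma 3.2]{MaPa-MM2016} and \cite[p.~1227]{PW-AM2016}---is cleaner and actually runs \emph{through} $({\rm j}_3)$ rather than before it. First, $\hat\varphi_+$ is coercive and weakly sequentially lower semicontinuous, so it attains its infimum at some $\tilde u\in K(\hat\varphi_+)$; by your own proof of $({\rm j}_3)$ this forces $\tilde u\in\{0,u_+\}$, and the sublinearity test from Lemma \ref{lemaux} (plugging $\rho\hat u_1$ and using $({\rm f}_3)$) gives $\inf_X\hat\varphi_+<0=\hat\varphi_+(0)$, whence $\tilde u=u_+$. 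Thus $u_+$ is a \emph{global} minimizer of $\hat\varphi_+$. Now comes the one-line observation replacing your heuristic: since $u_+\in{\rm int}(C_+)$, every $u$ in a sufficiently small $C^1(\overline\Omega)$-ball around $u_+$ satisfies $u>0$ on $\overline\Omega$, hence $u^+=u$ and therefore
\[
\hat\varphi(u)=\hat\varphi_+(u)\ge\hat\varphi_+(u_+)=\hat\varphi(u_+).
\]
So $u_+$ is a local $C^1(\overline\Omega)$-minimizer of $\hat\varphi$, and Proposition \ref{locmin} upgrades this to $X$. No strictness is asserted in the statement and none is needed; your worry that the neighbourhood might ``shrink to nothing'' disappears once you compare with the global minimum of $\hat\varphi_+$ instead of trying to estimate $\hat\varphi(u)-\hat\varphi(\min\{u,u_+\})$ directly.
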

\begin{proof}
Reasoning as before (cf. Lemma \ref{lemaux}) we can easily check $({\rm j}_1)$. A known argument (see., e.g., \cite[Lemma 3.2]{MaPa-MM2016} or \cite[p. 1227, Claim 2]{PW-AM2016}), chiefly based on Proposition \ref{locmin}, yields $({\rm j}_2)$.  Finally, concerning $({\rm j}_3)$, let us simply note that the obvious inclusion $K(\hat\varphi_+)\subseteq [0,u_+]$ forces $K(\hat\varphi_+)=\{0,u_+\}$ by extremality of $u_+$; cf. Proposition \ref{propone}. The same goes for $K(\hat\varphi_-)=\{0,v_-\}$.
\end{proof}
\begin{lemma}\label{prophatphix}
Under $({\rm f}_2)$--$({\rm f}_4)$ one has $C_k(\hat\varphi,0)=0$ for all $k\in\mathbb{N}_0$.
\end{lemma}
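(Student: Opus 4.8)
The natural approach is to show that $0$ is \emph{not} a local minimizer of $\hat\varphi$ and in fact that the sublevel set $\hat\varphi^0$ deformation-retracts, near $0$, onto a set whose homology vanishes in the relevant degrees; equivalently, to exhibit a cone-type structure around $0$ that kills all critical groups. Concretely, I would first use hypotheses $({\rm f}_3)$ and $({\rm f}_4)$ to control $\hat\varphi$ on small balls. From $({\rm f}_3)$ and the definition \eqref{defhatf} of $\hat f$, for $t$ near $0$ one has $\hat F(x,t)\geq \frac{\eta}{q}|t|^q - \frac{\hat\alpha+1}{p}|t|^p$ (roughly) for arbitrarily large $\eta$; combined with the upper estimate $G(\xi)\le c_9(|\xi|^q+|\xi|^p)$ from \eqref{estG} and the first eigenvalue characterization $({\rm p}_1)$ with the shifted weights $\alpha_0,\beta_0$, this gives $\hat\varphi(t u)<0$ for $u=\hat u_1$ and $t>0$ small. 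Hence $0$ lies on the boundary of $\hat\varphi^0$ but is not isolated in it from below along this ray. This already suggests $C_0(\hat\varphi,0)=0$; the work is to upgrade this to \emph{all} $k$.

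For the higher critical groups the standard device (as in Morse-theoretic treatments of sub-linear reactions, e.g. the Dirichlet analogues cited) is to prove that there exist $\rho>0$ small and a continuous deformation showing that $\hat\varphi^0\cap B_\rho$ is contractible, while $\hat\varphi^0\cap B_\rho\setminus\{0\}$ is also contractible — so that the pair has trivial homology by the long exact sequence. The key quantitative input is $({\rm f}_4)$: the condition $\lim_{t\to 0}\big(\hat q F(x,t)-f(x,t)t\big)/|t|^p > \big(1-\hat q/p\big)\|\alpha\|_\infty$, together with the relation $c_6 t^p \le t^2 a_0(t)-\hat q G_0(t)$ from $({\rm a}_4)$, yields, after differentiating $s\mapsto\hat\varphi(su)$ and using \eqref{prop1G}–\eqref{prop2G}, an inequality of the form
$$\frac{d}{ds}\,\hat\varphi(su)\Big|\;<\;\frac{\hat q}{s}\,\hat\varphi(su)\qquad\text{whenever }0<\|su\|\le\rho,\ \hat\varphi(su)\ge 0.$$
This differential inequality forces $\hat\varphi(su)$ to be strictly decreasing in $s$ past any point where it is nonnegative, so each ray hits $\{\hat\varphi\le 0\}$ exactly once and stays there; this is precisely the hypothesis needed to run the radial retraction of $B_\rho\setminus\{0\}$ onto $\partial B_\rho$ through a well-defined ``exit time'' map, and likewise to contract $\hat\varphi^0\cap B_\rho$ to $0$ along rays. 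Assembling these, excision and the homotopy invariance of singular homology give $C_k(\hat\varphi,0)=H_k(\hat\varphi^0\cap B_\rho,\hat\varphi^0\cap B_\rho\setminus\{0\})=0$ for every $k$.

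The main obstacle I anticipate is establishing the differential inequality cleanly: one must carefully combine the lower bound on $t^2a_0(t)-\hat q G_0(t)$, the quadratic-form behavior near $0$ encoded in $({\rm f}_4)$ (which involves $\|\alpha\|_\infty$ and the factor $1-\hat q/p$), and the boundary term $\frac1p\int_{\partial\Omega}\hat B$, and verify that the ``bad'' term $\frac1p\int_\Omega(\alpha+\hat\alpha)|u|^p$ — whose sign is not fixed since only $\alpha+\hat\alpha\ge\hat\alpha-\|\alpha\|_\infty>0$ — is absorbed. A secondary technical point is uniformity: the estimate in $({\rm f}_4)$ is a limit as $t\to 0$, so the inequality holds only on a sufficiently small ball $B_\rho$, and one must check that the radial exit-time function is continuous and that the deformation stays inside $B_\rho$; this is routine once $\rho$ is fixed small enough that both $({\rm f}_3)$ and $({\rm f}_4)$ are in force and $B_\rho$ avoids the ``gluing'' regions where $\hat f$ becomes constant in $t$ (i.e. $\rho<\min\{\min\overline u,\min|\underline u|\}$, using $v_-\in-{\rm int}(C_+)$, $u_+\in{\rm int}(C_+)$ from Proposition \ref{propone}).
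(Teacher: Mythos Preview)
Your strategy coincides with the paper's: show $\hat\varphi(tu)<0$ for small $t>0$ via $({\rm f}_3)$, derive from $({\rm a}_4)$ and $({\rm f}_4)$ a differential inequality along rays that determines a unique radial ``exit time'', and use the resulting retraction of $\overline{B}_\rho\setminus\{0\}$ onto $(\hat\varphi^0\cap\overline{B}_\rho)\setminus\{0\}$ to conclude contractibility and hence trivial critical groups.

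However, your key differential inequality has the wrong sign, and the geometric picture you draw from it is reversed. With $v:=su$ the actual estimate (using $c_6t^p\le t^2a_0(t)-\hat qG_0(t)$ from $({\rm a}_4)$ for the gradient part, and choosing $k>0$ via $({\rm f}_4)$ so that $\hat qF(x,t)-f(x,t)t\ge\big[(1-\tfrac{\hat q}{p})\Vert\alpha\Vert_\infty+k\big]|t|^p-C_3|t|^r$ near $t=0$) reads
\[
\langle\hat\varphi'(v),v\rangle-\hat q\,\hat\varphi(v)\;\ge\; c_6\Vert\nabla v\Vert_p^p+k\Vert v\Vert_p^p-C_3\Vert v\Vert_r^r\;>\;0\qquad\text{for }0<\Vert v\Vert\le\rho,
\]
i.e.\ $\tfrac{d}{ds}\hat\varphi(su)>\tfrac{\hat q}{s}\hat\varphi(su)$, not ``$<$''. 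Consequently, at any $t_0\in\ ]0,1]$ with $\hat\varphi(t_0u)=0$ the derivative is \emph{positive}: along each ray $t\mapsto\hat\varphi(tu)$ is negative on $]0,t(u)[$, vanishes at $t(u)$, and is positive on $]t(u),1]$. The retraction is then $r(u):=t(u)u$, fixing $(\hat\varphi^0\cap\overline{B}_\rho)\setminus\{0\}$ pointwise. Your stated consequence (``strictly decreasing past any nonnegative point, so each ray stays in $\{\hat\varphi\le 0\}$'') would, if it held, force $(\hat\varphi^0\cap B_\rho)\setminus\{0\}=B_\rho\setminus\{0\}$ and make the retraction step vacuous --- but it directly contradicts the displayed inequality. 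Once the sign is corrected, your outline is precisely the paper's argument.
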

\begin{proof}
It is rather delicate, but essentially analogous to the one made in \cite[Proposition 4.1]{PW-AM2016}. We shall present here a simpler trick. Fix $r>p$ and $\eta>0$. Assumptions $({\rm f}_2)$--$({\rm f}_3)$ give $C_1>0$ such that
\begin{equation*}
\hat F(x,t)\geq\eta|t|^q-C_1|t|^r,\;\;(x,t)\in\Omega\times\R.
\end{equation*}
Because of \eqref{estG} this implies
\begin{equation*}
\hat\varphi(tu)\leq c_9\left(t^q\Vert\nabla u\Vert_q^q+t^p\Vert\nabla u\Vert_p^p\right)+C_2t^p\Vert u\Vert_p^p+C_1t^r\Vert u\Vert_r^r-\eta t^q\Vert u\Vert_q^q
\end{equation*}
for every $t>0$, $u\in X$. Since $\eta$ was arbitrary while $q\leq p<r$, if $u\neq 0$ then there exists $t^*\in\  ]0,1[$ (which may depend on $u$) fulfilling $\hat\varphi(tu)<0$ whatever $t\in\ ]0,t^*[$. Define $t_1:= \sup\{ t \in [0,1]: \hat\varphi(tu) < 0\}$ as well as
\begin{equation*}
t_2:=\left\{
\begin{array}{ll}
\inf\{ t \in [0,1]: \hat\varphi(tu) \geq 0\} & \text{ when } \{ t \in [0,1]: \hat\varphi(tu) \geq 0\}\neq\emptyset,\\
1 & \text{ otherwise.}
\end{array}
\right.
\end{equation*}
We will show that $t_1\leq t_2$. By contradiction, suppose $t_2 < t_1$. Let $t_0\in\ ]0,1] $ satisfy $\hat\varphi(v)= 0$, where $v:= t_0 u$. Simple calculations based on (${\rm f}_2$)--(${\rm f}_4$) and \eqref{defeta} yield
\begin{equation*}
\hat q F(x,t)-f(x,t)t\geq\left[\left(1-\frac{\hat q}{p}\right)\Vert\alpha\Vert_\infty+k\right]|t|^p-C_3|t|^r,\;\; (x,t)\in\Omega\times[-\theta,\theta],
\end{equation*}
for some $k>0$, $C_3>0$. Thanks to $({\rm a}_4) $, this inequality entails
\begin{equation*}
\begin{split}
t_0\frac{d}{dt}\hat\varphi(tu)\lfloor_{t = t_0} & =\langle\hat\varphi'(v),v\rangle
=\langle\hat\varphi'(v),v\rangle- \hat{q}\hat\varphi(v)\\
& \geq c_6\Vert\nabla v\Vert_p^p +\left( 1- \frac{\hat{q}}{p}\right)\int_\Omega\alpha |v|^p dx+\int_\Omega\left(\hat{q} F(x,v)-f(x,v)v \right) dx\\
& \geq c_6\Vert\nabla v\Vert_p^p+k\Vert v\Vert_p^p-C_3\Vert v\Vert_r^r>0
\end{split}
\end{equation*}
provided $0<\Vert u\Vert\leq\rho$, with $\rho$ small enough. Thus,
\begin{equation}\label{posder}
\frac{d}{dt} \hat{\varphi}(t u) \lfloor_{t = t_0} > 0\;\;\text{whenever}\;\; t_0\in\ ]0,1],\;\hat{\varphi}(t_0 u) = 0.
\end{equation}
Since $\hat{\varphi}(t_2 u) = 0$, from \eqref{posder} it follows 
\begin{equation}\label{poshatphi}
\hat{\varphi}(tu)>0\quad\forall\, t \in\ ]t_2,t_2+\delta_1],
\end{equation}
where $0<\delta_1<t_1 - t_2$. Letting
$$\hat{t}:=\left\{
\begin{array}{ll}
\min\{t\in[t_2 + \delta_1,t_1]:\hat{\varphi}(tu) = 0\} & \text{ if }\{ t\in[t_2 + \delta_1,t_1]:\hat{\varphi}(tu) = 0\}\neq\emptyset,\\
1 & \text{ otherwise,}
\end{array}
\right.$$
\eqref{poshatphi} forces $\hat{t}>t_2+\delta_1 $. So, via \eqref{posder} when $\hat\varphi(\hat tu)=0$, we can find a $\delta_2 \in\ ]0,\hat{t}-t_2-\delta_1[$ such that
\begin{equation}\label{neghatphi}
\hat{\varphi}(t u) < 0,\quad t\in [\hat{t}-\delta_2,\hat{t}[\ .
\end{equation}
Now, by \eqref{poshatphi}, \eqref{neghatphi}, and Bolzano's theorem, $\hat{\varphi}(t^*u) = 0$ for some $t^*\in\ ]t_2 +\delta_1, \hat t-\delta_2[$, which is impossible due to the choice of $\hat{t}$. Therefore, $t_1\leq t_2$, as desired.\\
One actually has $t_1=t_2$, because assuming $t_1 < t_2$ leads to $\hat{\varphi}(tu) = 0$ in $]t_1,t_2[$, against \eqref{posder}. Put $t(u):=t_1=t_2$. Evidently,
$$\hat{\varphi}(tu) < 0\quad\forall\, t\in\ ]0,t(u)[\ ,\quad\hat\varphi(t(u)u)=0,
\quad\hat{\varphi}(tu) > 0\quad\forall\, t\in\ ]t(u),1]  ,$$
whence the map $r(u):= t(u)u$, $u\in\overline{B}_\rho\setminus\{0\}$, turns out continuous,
$$r(\overline{B}_\rho\setminus\{0\})\subseteq(\hat\varphi^0\cap\overline{B}_\rho)\setminus \{0\},\quad 
r\lfloor_{(\hat\varphi^0\cap\overline{B}_\rho)\setminus \{0\}}=id\lfloor_{(\hat\varphi^0\cap\overline{B}_\rho)\setminus \{0\}}.$$
This shows that $(\hat\varphi^0\cap\overline{B}_\rho)\setminus \{0\}$ is a retract of $\overline{B}_\rho\setminus\{0\}$. Consequently, $(\hat\varphi^0\cap\overline{B}_\rho)\setminus\{0\}$ turns out contractible in itself, since $\overline{B}_\rho\setminus\{0\}$ enjoys the same property. Now, Propositions 4.9--4.10 of \cite{GD} give
$$H_k(\hat\varphi^0\cap\overline{B}_\rho,(\hat\varphi^0\cap\overline{B}_\rho)\setminus\{0\})=0,\quad k\in\mathbb{N}_0\, ,$$
i.e., the conclusion.
\end{proof}
We are now ready to establish our first existence result.
\begin{theorem}\label{thmone}
If $({\rm a}_1)$--$({\rm a}_4)$, \eqref{abeta}, and $({\rm f}_1)$--$({\rm f}_4)$ hold true then \eqref{prob} admits a nodal solution $\hat u\in C^1(\overline{\Omega})\cap[v_-,u_+]$.
\end{theorem}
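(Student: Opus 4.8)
The plan is to reason by contradiction: assuming \eqref{prob} has no nodal solution in $[v_-,u_+]$, I would first locate all of $K(\hat\varphi)$, and then produce a mountain pass critical point of the truncated functional $\hat\varphi$ that the available critical group information rules out. Note first that on $[v_-,u_+]$ the truncations in \eqref{defhatf}--\eqref{defhatbeta} reduce to $\hat f(x,t)=f(x,t)+\hat\alpha|t|^{p-2}t$ and $\hat b(x,t)=\beta(x)|t|^{p-2}t$, so (via Remark \ref{equivprob}) every critical point of $\hat\varphi$ lying in $[v_-,u_+]$ solves \eqref{prob}; by $({\rm j}_1)$ of Lemma \ref{prophatphi}, all of $K(\hat\varphi)$ does. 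Therefore, if \eqref{prob} had no nodal solution in $[v_-,u_+]$, Remark \ref{nodsol} would force $K(\hat\varphi)\subseteq\{v_-,0,u_+\}$, and it is from this that I shall derive a contradiction.

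Next I would check that $u_+$ and $v_-$ are \emph{isolated} points of $K(\hat\varphi)$: a sequence $\{w_n\}\subseteq K(\hat\varphi)\setminus\{u_+\}$ with $w_n\to u_+$ in $X$ would be bounded in $L^\infty(\Omega)$ (it lies in $[v_-,u_+]$) and solve an equation with right-hand side bounded by $({\rm f}_2)$, so Lieberman's estimates \cite{L} would make it relatively compact in $C^1(\overline\Omega)$ and hence $w_n\to u_+$ in $C^1(\overline\Omega)$; since $u_+\in{\rm int}(C_+)$, this yields $w_n\in\Sigma_+$ for $n$ large, whence $w_n=u_+$ by Proposition \ref{propone}, a contradiction (symmetrically for $v_-$). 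Being isolated critical points which, by $({\rm j}_2)$, are local minimizers of $\hat\varphi$, $u_+$ and $v_-$ are in fact strict local minimizers, so
$$C_k(\hat\varphi,u_+)=C_k(\hat\varphi,v_-)=\delta_{k,0}\mathbb{Z}\,,\quad k\in\mathbb{N}_0\,,$$
while $C_k(\hat\varphi,0)=0$ for every $k$ by Lemma \ref{prophatphix}.

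Finally, $\hat\varphi$ satisfies $({\rm PS})$ and admits the two strict local minimizers $v_-\neq u_+$, so the mountain pass theorem yields a critical point $\hat u$ of $\hat\varphi$ with
$$\hat\varphi(\hat u)=\inf_{\gamma\in\Gamma}\ \max_{t\in[0,1]}\hat\varphi(\gamma(t))>\max\{\hat\varphi(v_-),\hat\varphi(u_+)\}\,,\qquad \Gamma:=\{\gamma\in C^0([0,1],X):\ \gamma(0)=v_-,\ \gamma(1)=u_+\}\,,$$
and, being of mountain pass type, $C_1(\hat\varphi,\hat u)\neq 0$ (see \cite{MoMoPa}). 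The strict level inequality gives $\hat u\notin\{v_-,u_+\}$, while $C_1(\hat\varphi,\hat u)\neq 0$ together with $C_1(\hat\varphi,0)=0$ gives $\hat u\neq 0$; thus $\hat u\in K(\hat\varphi)\cap[v_-,u_+]\cap C^1(\overline\Omega)$ (by $({\rm j}_1)$) with $\hat u\notin\{v_-,0,u_+\}$, contradicting $K(\hat\varphi)\subseteq\{v_-,0,u_+\}$. Hence \eqref{prob} admits a nodal solution $\hat u\in C^1(\overline\Omega)\cap[v_-,u_+]$, which is the claim. The step I expect to need real care is showing that $u_+$ and $v_-$ are isolated strict local minimizers; everything else is a routine assembly of Lemmas \ref{lemaux}--\ref{prophatphix}, Proposition \ref{propone}, and standard critical point and Morse theory.
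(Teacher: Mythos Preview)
Your argument is correct and reaches the same conclusion, but by a different mechanism than the paper. The paper proceeds purely through the Morse relation \eqref{morse}: assuming $K(\hat\varphi)=\{0,u_+,v_-\}$, it computes $C_k(\hat\varphi,\infty)=\delta_{k,0}\mathbb{Z}$ from coercivity, combines this with $C_k(\hat\varphi,u_+)=C_k(\hat\varphi,v_-)=\delta_{k,0}\mathbb{Z}$ and $C_k(\hat\varphi,0)=0$, and evaluates \eqref{morse} at $t=-1$ to obtain the impossible identity $2=1$. You instead run the mountain pass theorem between the two strict local minimizers and invoke the standard fact that an isolated mountain pass critical point has $C_1\neq 0$, which clashes with $C_1(\hat\varphi,0)=0$ from Lemma \ref{prophatphix}. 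Your route avoids computing the critical groups at infinity, at the price of needing the mountain pass geometry and the $C_1$--characterization; the paper's route is essentially a two-line counting argument once the local groups are in hand. One simplification: your paragraph verifying that $u_+$ and $v_-$ are isolated is unnecessary --- once the contradiction hypothesis $K(\hat\varphi)\subseteq\{v_-,0,u_+\}$ is in force the critical set is finite, so every point of it is automatically isolated, and isolated local minimizers are strict in the sense required by the mountain pass theorem.
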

\begin{proof}
Since $\hat\varphi$ turns out coercive, via \cite[Proposition 6.64]{MoMoPa} one has $C_k(\hat\varphi,\infty)=\delta_{k,0}{\mathbb Z}$. Combining $({\rm j}_2)$ of Lemma \ref{prophatphi} with \cite[Example 6.45]{MoMoPa} next entails 
\begin{equation}\label{prophatphi0}
C_k(\hat\varphi,u_+)=C_k(\hat\varphi,v_-)=\delta_{k,0}{\mathbb Z}.
\end{equation}
Suppose $K(\hat\varphi)=\{0,u_+,v_-\}$, recall Lemma \ref{prophatphix}, and write the Morse relation \eqref{morse} for $t:=-1$, to arrive at $2(-1)^0=(-1)^0$, which is evidently impossible. Thus, there exists a point $\hat u\in K(\hat\varphi)\setminus\{0,u_+,v_-\}$. The conclusion easily stems from $({\rm j}_1)$ of Lemma \ref{prophatphi} besides \eqref{defhatf}--\eqref{defhatbeta}.
\end{proof}
\subsection{The $(q-1)$-linear case}
For $q, c_7$ given by $({\rm a}_4)$, $\alpha_0:=|\alpha|/c_7$, and $\beta_0:=\beta/c_7$, assume that:
\begin{itemize}
\item[$({\rm f}_5)$] Uniformly in $x\in\Omega$, one has
$$c_7\hat\lambda_2(q,\alpha_0,\beta_0)<c_{10}<\liminf_{t\to 0}\frac{f(x,t)}{|t|^{q-2}t}\leq\limsup_{t\to 0}\frac{f(x,t)}{|t|^{q-2}t}\leq c_{11}.$$
\end{itemize}
A careful inspection of proofs reveals that all the auxiliary results above, except Lemma \ref{prophatphix}, remain valid whenever $({\rm f}_5)$ replaces $({\rm f}_3)$. So, although critical groups cannot be employed, the same conclusion is achieved via $({\rm p}_3)$ in Section \ref{sectiontwo}.
\begin{theorem}\label{thmtwo}
Let  $({\rm a}_1)$--$({\rm a}_4)$, \eqref{abeta}, $({\rm f}_1)$--$({\rm f}_2)$, and $({\rm f}_5)$ be satisfied. Then \eqref{prob}
possesses a nodal solution $\hat u\in C^1(\overline{\Omega})$.
\end{theorem}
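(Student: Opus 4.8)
The plan is to mimic the proof of Theorem~\ref{thmone}: produce a fourth critical point of the truncated--perturbed energy $\hat\varphi$ besides $0,u_+,v_-$, and read off a nodal solution of \eqref{prob} from $({\rm j}_1)$ of Lemma~\ref{prophatphi}, formulas \eqref{defhatf}--\eqref{defhatbeta}, and Remark~\ref{nodsol}. First I would note that every auxiliary result of the previous subsection \emph{except} Lemma~\ref{prophatphix} survives with $({\rm f}_5)$ replacing $({\rm f}_3)$: the only role of $({\rm f}_3)$ there was to guarantee, near $0$, an inequality $f(x,t)t\ge\eta|t|^q-C_\eta|t|^r$ with $\eta$ as large as one pleases, while $({\rm f}_5)$ gives the same with the fixed value $\eta:=c_{10}$, and $c_{10}>c_7\hat\lambda_2(q,\alpha_0,\beta_0)>c_7\hat\lambda_1(q,\alpha_0,\beta_0)$ is exactly what those proofs require. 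So I may take for granted the extremal constant-sign solutions $u_+\in\Sigma_+$, $v_-\in\Sigma_-$, the coercive $({\rm PS})$-functionals $\hat\varphi,\hat\varphi_\pm$, the fact that $u_+,v_-$ are local minimizers of $\hat\varphi$, and $K(\hat\varphi)\subseteq[v_-,u_+]\cap C^1(\overline{\Omega})$, $K(\hat\varphi_+)=\{0,u_+\}$, $K(\hat\varphi_-)=\{0,v_-\}$.

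Next I would pin down the energy levels. A rescaling computation patterned on Lemma~\ref{lemaux} --- evaluating $\hat\varphi_\pm$ at $\pm\rho\,\hat u_1(q,\alpha_0,\beta_0)$, letting $\rho\downarrow 0$, and using $qG_0(t)/t^q\to c_7$ and $F(x,t)\ge\frac{c_{10}-\varepsilon}{q}|t|^q$ near $0$ together with $c_7\hat\lambda_1(q,\alpha_0,\beta_0)<c_{10}$ and $q\le p$ --- shows $\hat\varphi_\pm(\pm\rho\,\hat u_1)<0$ for $\rho$ small. Since $\hat\varphi_\pm$ are coercive with critical sets $\{0,u_+\}$ resp.\ $\{0,v_-\}$, their attained infima are negative and realized at $u_+$ resp.\ $v_-$; as $\hat\varphi_+(u_+)=\hat\varphi(u_+)$ and $\hat\varphi_-(v_-)=\hat\varphi(v_-)$, this yields $\hat\varphi(u_+)<0$, $\hat\varphi(v_-)<0$. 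I then intend to apply the mountain-pass theorem in its two-local-minima form (see, e.g., \cite{MoMoPa}) to the coercive functional $\hat\varphi$ at the local minimizers $u_+,v_-$: it produces $\hat u\in K(\hat\varphi)$ with $\hat\varphi(\hat u)=c:=\inf_{\gamma\in\Gamma}\max_{[0,1]}\hat\varphi\circ\gamma$, $\Gamma$ the paths joining $v_-$ and $u_+$, and $C_1(\hat\varphi,\hat u)\ne 0$; the latter already forces $\hat u\notin\{u_+,v_-\}$.

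The crux is to show $c<0$: then $\hat\varphi(\hat u)=c<0=\hat\varphi(0)$ forces $\hat u\ne 0$, so $\hat u\in K(\hat\varphi)\setminus\{0,u_+,v_-\}$ is the desired nodal solution. This is where $({\rm p}_3)$ enters. By $({\rm p}_3)$ for the triple $(q,\alpha_0,\beta_0)$ together with Lemma~\ref{density}, I fix $\hat\gamma\in\hat\Gamma_C$ --- so $\hat\gamma(t)\in C^1(\overline{\Omega})$, $\|\hat\gamma(t)\|_q=1$, $\hat\gamma(-1)=-\hat u_1$, $\hat\gamma(1)=\hat u_1$, with relatively compact image in $C^1(\overline{\Omega})$ --- such that $\max_{t\in[-1,1]}{\cal E}_q(\hat\gamma(t))<c_{10}/c_7$. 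For $s>0$ small the rescaled path $t\mapsto s\hat\gamma(t)$ lies pointwise in $[v_-,u_+]$, where $\hat\varphi$ collapses to $\int_\Omega G(\nabla\cdot)\,dx+\frac1p\int_\Omega\alpha|\cdot|^p\,dx+\frac1p\int_{\partial\Omega}\beta|\cdot|^p\,d\sigma-\int_\Omega F(x,\cdot)\,dx$; combining \eqref{estG}, the asymptotics of $G_0$ from $({\rm a}_4)$, the lower bound $F(x,\tau)\ge\frac{c_{10}-\varepsilon}{q}|\tau|^q-C_\varepsilon|\tau|^r$ from $({\rm f}_5)$, the strict inequality $c_7{\cal E}_q(\hat\gamma(t))<c_{10}$, and $q\le p<r$, one obtains $\hat\varphi(s\hat\gamma(t))<0$ for all $t\in[-1,1]$ once $\varepsilon$ and $s$ are chosen small. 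Finally I would splice this middle arc with two lateral arcs, from $v_-$ to $-s\hat u_1$ and from $s\hat u_1$ to $u_+$, contained in $\{\hat\varphi<0\}$: these exist since, by the second deformation lemma, $\{\hat\varphi_-<0\}$ and $\{\hat\varphi_+<0\}$ are path-connected (each $\hat\varphi_\pm$ being coercive with only the critical values $0$ and its negative minimum, at $v_-$ resp.\ $u_+$), they contain $-s\hat u_1$ resp.\ $s\hat u_1$, and --- choosing $\hat\alpha$ large enough that $\hat F\ge 0$ on the relevant half-lines --- they sit inside $\{\hat\varphi<0\}$. Concatenation gives a path from $v_-$ to $u_+$ on which $\hat\varphi<0$, so $c<0$, and the proof concludes as above.

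I expect the main obstacle to be the rescaling step for $\hat\varphi(s\hat\gamma(t))$: making it uniform in $t$, especially in the borderline case $q=p$ where the $p$-homogeneous terms are not of lower order and one must use the sharp limit $qG_0(t)/t^q\to c_7$ rather than the crude bound \eqref{estG}. A secondary technicality is ensuring the two lateral arcs stay inside $\{\hat\varphi<0\}$, which leans on a suitable choice of $\hat\alpha$ and on the path from Lemma~\ref{density} having relatively compact $C^1$-image, so that all the ``small $s$'' requirements can be satisfied simultaneously.
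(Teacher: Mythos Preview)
Your proposal is correct and follows essentially the same route as the paper: mountain-pass between the local minimizers $v_-,u_+$, with the key step being the construction of a path $\tilde\gamma\in\Gamma$ on which $\hat\varphi<0$, built by concatenating a rescaled $\hat\Gamma_C$-path (via $({\rm p}_3)$ and Lemma~\ref{density}) with two lateral arcs coming from the second deformation lemma applied to $\hat\varphi_\pm$. The only noteworthy difference is in the lateral arcs: the paper takes \emph{positive/negative parts} of the deformation, namely $\gamma_+(t):=h(t,\tau\hat u_1)^+$, so that $\hat\varphi(\gamma_+(t))=\hat\varphi_+(\gamma_+(t))\le\hat\varphi_+(h(t,\tau\hat u_1))$ automatically, whereas you instead rely on path-connectedness of $\{\hat\varphi_\pm<0\}$ together with a large choice of $\hat\alpha$ to force $\hat F\ge 0$ and hence $\{\hat\varphi_\pm<0\}\subseteq\{\hat\varphi<0\}$; both work, but the paper's device avoids having to revisit the choice of $\hat\alpha$.
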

\begin{proof}
Recalling $({\rm j}_1)$--$({\rm j}_2)$ of Lemma \ref{prophatphi}, we may suppose $K(\hat\varphi)$ finite, the local minimizer $v_-,u_+$ proper, besides $\hat\varphi(v_-)\leq\hat \varphi(u_+)$ (the other case is analogous). If $0<\rho<\Vert u_+-v_-\Vert$ complies with
\begin{equation}\label{crho}
\hat\varphi(u_+)<C_\rho:=\inf_{u\in\partial B_\rho(u_+)}\hat\varphi(u)
\end{equation}
then the Mountain Pass theorem produces a point $u_1\in K(\hat\varphi)$ such that
\begin{equation}\label{defw}
C_\rho\leq\hat\varphi(u_1)= \inf_{\gamma\in\Gamma}\max_{t\in[0,1]}\hat\varphi(\gamma(t)),
\end{equation}
where
$$\Gamma:=\{\gamma\in C^0([0,1],X):\;\gamma(0)=v_-,\;\gamma(1)=u_+\}\, .$$
By $({\rm j}_1)$ in Lemma \ref{prophatphi}, $u_1$ belongs to $C^1(\overline{\Omega})$ and solves \eqref{prob}. Through \eqref{crho}--\eqref{defw} we next get $u_1\neq v_-,u_+$. Thus, on account of Proposition \ref{propone}, it remains to check whether $u_1\neq 0$. This will follow from the inequality $\hat\varphi(u_1)<0$, which evidently holds once there exists a path
$\tilde\gamma \in\Gamma$ such that
\begin{equation}\label{path}
\hat\varphi(\tilde\gamma(t))<0\quad\forall\, t\in [0,1].
\end{equation}
Fix $\ep>0$. Using $({\rm a}_4)$ yields
\begin{equation}\label{Gsmall} 
G(\xi)\leq\frac{c_7+\ep}{q}|\xi|^q,\quad |\xi|\leq\delta,
\end{equation}
while $({\rm f}_5)$ entails
\begin{equation}\label{Fsmall}
F(x,t)\geq\frac{c_{10}}{q}|t|^q,\quad (x,t)\in\Omega\times[-\delta,\delta],
\end{equation}
provided $\delta>0$ is small enough. Now, denote by ${\cal E}^0_q$ the functional \eqref{defE} with $\alpha_0$ and $\beta_0$ in place of $\alpha$ and $\beta$, respectively. Thanks to $({\rm p}_3)$ besides Lemma \ref{density}, given $\eta>0$ we can find a
path $\hat\gamma_\eta\in\hat\Gamma_C$ such that
\begin{equation}\label{smalleta}
\max_{t\in [-1,1]}c_7{\cal E}_q^0(\hat\gamma_\eta(t))<c_7\hat\lambda_2(q,\alpha_0,\beta_0)+\eta.
\end{equation}
Since $\hat\gamma_\eta([-1,1])$ is compact in $C^1(\overline{\Omega})$ while $-v_-,u_+ \in{\rm int}(C_+)$, there exists
$\tau>0$ fulfilling
$$v_-(x)\leq\tau\hat\gamma_\eta(t)(x)\leq u_+(x),\quad |\tau\hat\gamma_\eta(t)(x)|\leq\delta\leq 1,\quad |\tau\nabla\hat\gamma_\eta(t)(x)|\leq\delta$$
for all $(x,t)\in\Omega\times [-1,1]$. On account of $q\leq p$,  the inequalities above, \eqref{Gsmall}--\eqref{smalleta}, and $\Vert\hat\gamma_\eta(t)\Vert_q\equiv 1$, one arrives at
\begin{equation*}
\hat\varphi(\tau\hat\gamma_\eta(t))\leq\frac{\tau^q}{q}\left[ c_7{\cal E}_q^0(\hat\gamma_\eta(t))+\ep\Vert\nabla\hat\gamma_\eta(t)\Vert_q^q-c_{10}\right]<\frac{\tau^q}{q}\left[ c_7\hat\lambda_2(q,\alpha_0,\beta_0)+\eta+C_1\,\ep-c_{10}\right].
\end{equation*}
Therefore,
\begin{equation}\label{middle}
\hat\varphi(\tau\hat\gamma_\eta(t))<0\quad\forall\, t\in [-1,1]
\end{equation}
as soon as $\ep$ and $\eta$ are taken so small that
$$\eta+C_1\,\ep<c_{10}- c_7\hat\lambda_2(q,\alpha_0,\beta_0);$$
see $({\rm f}_5)$. Next, if $\hat a:=\hat\varphi_+(u_+)$ then $\hat a<0$, because
$$\hat\varphi_+(u_+)=\inf_{u\in X}\hat\varphi_+(u)<0=\hat\varphi_+(0),$$
no critical value of $\hat\varphi_+$ lies in $(\hat a,0)$, and $K_{\hat a}(\hat\varphi_+) =\{ u_+\}$; see Lemma \ref{prophatphi}.
Thus, the second deformation lemma \cite[Theorem 5.1.33]{GaPa-NA2006} gives a continuous map $h:[0,1]\times (\hat\varphi_+^0\setminus\{0\})\to\hat\varphi_+^0$ satisfying
\begin{equation}\label{proph}
h(0,u)=u\, ,\quad h(1,u)=u_+\, ,\quad\hat\varphi_+(h(t,u))\leq\hat\varphi_+(u)
\end{equation}
for all $(t,u)\in [0,1]\times(\hat\varphi_+^0\setminus\{0\})$. By \eqref{middle} one has
$$\hat\varphi_+(\tau\hat u_1(q,\alpha_0,\beta_0))=\hat\varphi(\tau\hat\gamma_\eta(1))<0.$$
Hence, it makes sense to define
$$\gamma_+(t):=h(t,\tau\hat u_1(q,\alpha_0,\beta_0))^+,\quad t\in [0,1].$$
The path $\gamma_+:[0,1]\to X$ connects $\tau\hat u_1(q,\alpha_0,\beta_0)$ with $u_+$. Moreover, due to \eqref{middle}--\eqref{proph},
\begin{equation}\label{gammapiu}
\hat\varphi(\gamma_+(t))=\hat\varphi_+(\gamma_+(t))\leq\hat\varphi_+(\tau\hat u_1(q,\alpha_0,\beta_0))=\hat\varphi(\tau\hat\gamma_\eta(1))<0\quad\forall\, t\in [0,1].
\end{equation}
A similar reasoning, where $\hat\varphi_-$ takes the place of $\hat\varphi_+$, produces a continuous function $\gamma_-:[0,1]\to X$ such that $\gamma_-(0)=v_-$, $\gamma_-(1)=-\tau\hat u_1(q,\alpha_0,\beta_0)$, as well as
\begin{equation}\label{gammameno}
\hat\varphi(\gamma_-(t))<0\;\;\text{in}\;\;[0,1].
\end{equation}
Concatenating $\gamma_-$, $\tau\hat\gamma_\eta$, and $\gamma_+$ one obtains a path $\tilde\gamma \in\Gamma$ which, in view of \eqref{gammapiu}--\eqref{gammameno}, besides \eqref{middle}, fulfills (\ref{path}).
\end{proof}
\subsection{The case of $(p,2)$-Laplacian}
Let $p>2$, $a_0(t):=t^{p-2}+1$, namely $q:=2$, $f(x,\cdot)\in C^1([-\theta,\theta])$ for every $x\in\Omega$, and let
$\hat\lambda_n$, $E(\hat\lambda_n)$, $\bar H_n$, $\hat H_n$ be like at the end of Section \ref{sectiontwo}. The following assumptions will be posited.
\begin{itemize}
\item[ $({\rm f}_6)$] There exists $a_\theta\in L^\infty(\Omega)$ such that $|f'_t(x, t)|\leq a_\theta(x)$ in $\Omega\times[-\theta,\theta]$.
\item[ $({\rm f}_7)$] With appropriate $m\geq 2$, $\delta_0>0$ small, $b\in L^\infty(\Omega)\setminus\{\hat\lambda_{m+1}\}$ one has $f(x,t)t\geq\hat\lambda_mt^2$ for all $(x,t)\in\Omega\times [-\delta_0,\delta_0]$ and 
$$f'_t(x,0)=\lim_{t\to 0}\frac{f(x,t)}{t}\leq b(x)\leq\hat\lambda_{m+1}\;\;\text{uniformly in}\;\; x\in\Omega.$$
\end{itemize}
\begin{remark}\label{furtherhyp}
As before, except Lemma \ref{prophatphix}, the auxiliary results above remain valid if $({\rm f}_6)$--$({\rm f}_7)$ replace $({\rm f}_3)$.
\end{remark}
Now, recall \eqref{defhatf}--\eqref{defhatbeta} and define
\begin{equation*}
\hat\psi(u):=\frac{1}{2}\left[ \Vert\nabla u\Vert_2^2+\int_\Omega(\alpha+\hat\alpha)u^2dx\right]+\int_{\partial\Omega}\hat B(x,u)d\sigma-\int_\Omega\hat F(x,u)dx,\;\; u\in H^1(\Omega).
\end{equation*}
Evidently, $\hat\psi$ is $C^2$ in a neighborhood of the origin, besides $C^1$ on the whole $H^1(\Omega)$.
\begin{lemma}\label{prophatpsi}
Hypotheses \eqref{abeta} and $({\rm f}_6)$-- $({\rm f}_7)$ entail $C_k(\hat\psi,0)=\delta_{k,d_m}\mathbb{Z}$ for every $k\in\mathbb{N}_0$, where $d_m:={\rm dim}(\bar H_m)\geq 2$.
\end{lemma}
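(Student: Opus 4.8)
The plan is to compute the critical groups of $\hat\psi$ at the origin by exploiting that $\hat\psi$ is $C^2$ near $0$ and that $0$ is a critical point whose local behavior is governed by the quadratic form $\hat\psi''(0)$. First I would write down $\hat\psi''(0)$ explicitly: differentiating under the integral sign and using \eqref{defhatf}--\eqref{defhatbeta} together with $({\rm f}_7)$, one gets, for $v\in H^1(\Omega)$,
\begin{equation*}
\langle\hat\psi''(0)v,v\rangle=\Vert\nabla v\Vert_2^2+\int_\Omega(\alpha+\hat\alpha)v^2\,dx+\int_{\partial\Omega}\beta v^2\,d\sigma-\int_\Omega\big(f'_t(x,0)+\hat\alpha\big)v^2\,dx={\cal E}_2(v)-\int_\Omega f'_t(x,0)v^2\,dx,
\end{equation*}
so that on $\bar H_m$ one compares $f'_t(x,0)\ge\hat\lambda_m$ (from $f(x,t)t\ge\hat\lambda_mt^2$, which forces the derivative bound at $0$) against the variational characterization $({\rm p}_4)$ giving ${\cal E}_2(v)\le\hat\lambda_m\Vert v\Vert_2^2$, and on $\hat H_{m+1}$ one compares $f'_t(x,0)\le b(x)\le\hat\lambda_{m+1}$ against ${\cal E}_2(v)\ge\hat\lambda_{m+1}\Vert v\Vert_2^2$.

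Next I would split into the nondegenerate and the degenerate case. If $b\not\equiv\hat\lambda_{m+1}$ on a set of positive measure with strict inequality somewhere, or more precisely whenever $0$ is a nondegenerate critical point, the strict inequalities above show that $\hat\psi''(0)$ is negative definite on $\bar H_m$ (a space of dimension $d_m\ge2$, the lower bound coming from $m\ge2$ and $\dim E(\hat\lambda_1)=1$ via $({\rm p}_1)$) and positive definite on a complement, so the Morse index equals $d_m$ and $0$ is nondegenerate; then the standard Morse-theoretic fact recalled in the preliminaries — a nondegenerate critical point of index $d$ has $C_k=\delta_{k,d}\mathbb Z$ — gives the claim directly. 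If $0$ is degenerate, I would invoke the shifting theorem / the local structure of $C^2$ functionals near a critical point (as in the Gromoll–Meyer theory used throughout this literature, e.g.\ the monograph \cite{MoMoPa}): decompose $H^1(\Omega)=\bar H_m\oplus N\oplus Y$ with $N$ the kernel of $\hat\psi''(0)$ and $Y$ the positive part, so that $C_k(\hat\psi,0)=C_{k-d_m}(\hat\psi\circ\text{(reduction)},0)$ where the reduced functional lives on $N$; here the hypothesis $b(x)\le\hat\lambda_{m+1}$ combined with $f(x,t)t\ge\hat\lambda_mt^2$ pins down the behavior of $\hat\psi$ restricted to the kernel directions, which sit inside $E(\hat\lambda_{m+1})$, and yields that the reduced functional has a local minimum at $0$, forcing $C_j(\text{reduced},0)=\delta_{j,0}\mathbb Z$ and hence the stated formula.

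The main obstacle is the degenerate case, i.e.\ controlling the critical groups when $b\equiv\hat\lambda_{m+1}$ on part of $\Omega$ so that $E(\hat\lambda_{m+1})$ (or a subspace of it) lies in the kernel of $\hat\psi''(0)$. One must show that the condition $f(x,t)t\ge\hat\lambda_mt^2$ near $0$, which is a one-sided bound weaker than a clean resonance condition, still produces a local-minimum situation for the reduced functional on the kernel — this requires a careful expansion of $\hat F(x,t)$ to the relevant order and an argument that the quartic (or higher) remainder has the right sign on eigenfunctions of $\hat\lambda_{m+1}$, typically using that such eigenfunctions change sign and combining with the pointwise inequality. I would model this step on the resonance computations in the cited Robin-problem papers \cite{DaMaPa-JMAA2016,MaPa-JMAA2016} and the Morse-theoretic machinery of \cite{MoMoPa}, and once the reduced functional is shown to have a local minimum at $0$ the conclusion $C_k(\hat\psi,0)=\delta_{k,d_m}\mathbb Z$ follows by the shifting theorem. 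A secondary technical point, to be dispatched first, is verifying $\hat\psi\in C^2$ near $0$ and that $0\in K(\hat\psi)$, both of which are immediate from $({\rm f}_6)$, $f(x,0)=0$ (implicit in $({\rm f}_3)$-type behavior carried over), and the definitions \eqref{defhatf}--\eqref{defhatbeta}.
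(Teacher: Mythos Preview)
Your route via $\hat\psi''(0)$ and the shifting theorem is genuinely different from the paper's, but it has a gap in the degenerate case that you have not identified. From $f(x,t)t\ge\hat\lambda_m t^2$ you correctly deduce $f'_t(x,0)\ge\hat\lambda_m$, but this inequality need not be strict: $(\mathrm f_7)$ permits $f'_t(x,0)\equiv\hat\lambda_m$. In that situation $\langle\hat\psi''(0)v,v\rangle={\cal E}_2(v)-\hat\lambda_m\Vert v\Vert_2^2=0$ for every $v\in E(\hat\lambda_m)$, so $\hat\psi''(0)$ is only negative \emph{semi}definite on $\bar H_m$, the Morse index drops to $d_{m-1}$, and $E(\hat\lambda_m)$ sits inside the kernel. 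Your degenerate-case discussion treats only kernel directions coming from $E(\hat\lambda_{m+1})$ and aims for a local \emph{minimum} of the reduced functional there; for kernel directions in $E(\hat\lambda_m)$ you would instead need the reduced functional to have a local \emph{maximum}, and this requires the full pointwise inequality $F(x,t)\ge\frac{\hat\lambda_m}{2}t^2$ (not just the derivative bound) to force $\hat\psi\le 0$ along those directions. So the case split is incomplete and the reduced-functional analysis, as written, points the wrong way on part of the kernel.

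The paper sidesteps all of this by never looking at $\hat\psi''(0)$. It establishes a \emph{local linking} at $0$ with respect to $H^1(\Omega)=\bar H_m\oplus\overline{\hat H_{m+1}}$: the inequality $f(x,t)t\ge\hat\lambda_m t^2$ integrates to $F(x,t)\ge\frac{\hat\lambda_m}{2}t^2$, which combined with $(\mathrm p_4)$ gives $\hat\psi(u)\le 0$ on $\bar H_m\cap\overline{B}_{\rho_1}$; and the bound $f'_t(x,0)\le b(x)\le\hat\lambda_{m+1}$, $b\not\equiv\hat\lambda_{m+1}$, gives $\hat F(x,t)\le\frac{1}{2}(b(x)+\ep)t^2+C|t|^r$, whence (via a coercivity lemma from \cite{DaMaPa-JMAA2016}) $\hat\psi(u)>0$ on $\hat H_{m+1}\cap\overline{B}_{\rho_2}\setminus\{0\}$. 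Then \cite[Proposition 2.3]{Su-NA2002} yields $C_k(\hat\psi,0)=\delta_{k,d_m}\mathbb Z$ directly, with no nondegenerate/degenerate dichotomy and no Gromoll--Meyer reduction. The advantage of the paper's approach is precisely that the local-linking criterion is insensitive to whether $0$ is degenerate, which is exactly where your argument runs into trouble.
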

\begin{proof}
Since $\bar H_m$ is finite dimensional, we can find $\rho_1>0$ such that
$$u\in\bar H_m\cap\overline{B}_{\rho_1}\;\;\implies\;\; |u(x)|\leq\delta_0\;\;\forall\, x\in\Omega.$$
Via $({\rm f}_7)$ and $({\rm p}_4)$, this easily leads to
\begin{equation}\label{loclinkone}
\hat\psi(u)\leq\frac{1}{2}\left[{\cal E}_2(u)-\hat\lambda_m\Vert u\Vert_2^2\right]\leq 0,\;\; u\in\bar H_m\cap
\overline{B}_{\rho_1}.
\end{equation}
Next, given $\ep>0$, $r>2$, assumption $({\rm f}_7)$ yields
\begin{equation*}
\hat F(x,t)\leq\frac{1}{2}\left(b(x)+\ep\right)t^2+C_1|t|^r\;\;\text{in}\;\;\Omega\times\R,
\end{equation*}
whence, by \cite[Lemma 2.2]{DaMaPa-JMAA2016},
\begin{equation*}
\hat\psi(u)\geq\frac{1}{2}\left[{\cal E}_2(u)-\int_\Omega b(x)u^2 dx-\ep\Vert u\Vert^2\right]-C_1\Vert u\Vert^r\geq\frac{\hat c-\ep}{2}\Vert u\Vert^2-C_1\Vert u\Vert^r\;\;\forall\, u\in\hat H_{m+1}.
\end{equation*}
Here, $\Vert\cdot\Vert$ denotes the usual norm of $H^1(\Omega)$. Choosing $\ep<\hat c$ we thus achieve
\begin{equation}\label{loclinktwo}
\hat\psi(u)>0,\;\; u\in\hat H_{m+1}\cap\overline{B}_{\rho_2}\setminus\{0\},
\end{equation}
provided $\rho_2>0$ is small enough. Inequalities \eqref{loclinkone}--\eqref{loclinktwo} ensure that $\hat\psi$ has a local linking at zero with respect to the sum decomposition $H^1(\Omega)=\bar H_m\oplus V$, where $V$ indicates the closure of $\hat H_{m+1}$ in $H^1(\Omega)$. Since $\psi$ is coercive, it satisfies condition (PS); see Proposition \ref{coercivePS}. So, the conclusion follows from \cite[Proposition 2.3]{Su-NA2002}.
\end{proof}
\begin{theorem}\label{ptwocase}
Let \eqref{abeta}, $({\rm f}_1)$, $({\rm f}_6)$, and $({\rm f}_7)$ be satisfied. Then \eqref{prob}, where $p>2$ while $a_0(t):=t^{p-2}+1$, admits a nodal solution $\hat u\in C^1(\overline{\Omega})$.
\end{theorem}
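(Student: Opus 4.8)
The plan is to run the same Morse-theoretic scheme used in Theorem \ref{thmone}, but now exploiting the $C^2$-structure coming from the $(p,2)$-Laplacian to compute the critical groups at the origin. First I would introduce the truncated-perturbed functional $\hat\psi$ (already defined in the excerpt) along with its positive/negative counterparts $\hat\psi_\pm$, obtained by replacing $\hat F(x,u)$ with $\hat F(x,u^\pm)$. By Remark \ref{furtherhyp}, the analogues of Lemmas \ref{lemaux}--\ref{prophatphi} and Proposition \ref{propone} hold with $({\rm f}_6)$--$({\rm f}_7)$ in place of $({\rm f}_3)$, so one obtains extremal constant-sign solutions $u_+\in\Sigma_+\subseteq{\rm int}(C_+)$ and $v_-\in\Sigma_-\subseteq-{\rm int}(C_+)$, the inclusion $K(\hat\psi)\subseteq[v_-,u_+]\cap C^1(\overline{\Omega})$, the fact that $u_+,v_-$ are local minimizers of $\hat\psi$, and $K(\hat\psi_+)=\{0,u_+\}$, $K(\hat\psi_-)=\{0,v_-\}$. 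Since $\hat\psi$ is coercive, Proposition \ref{coercivePS} gives condition (PS); hence, as in Theorem \ref{thmone}, $C_k(\hat\psi,\infty)=\delta_{k,0}\mathbb Z$ and, from the local-minimizer property via \cite[Example 6.45]{MoMoPa}, $C_k(\hat\psi,u_+)=C_k(\hat\psi,v_-)=\delta_{k,0}\mathbb Z$ for all $k\in\mathbb N_0$.

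Next I would invoke Lemma \ref{prophatpsi}: under \eqref{abeta} and $({\rm f}_6)$--$({\rm f}_7)$ one has $C_k(\hat\psi,0)=\delta_{k,d_m}\mathbb Z$ with $d_m={\rm dim}(\bar H_m)\geq 2$. The decisive point is the strict inequality $d_m\geq 2$, which makes the critical groups at $0$ sit in a degree $\neq 0$. Arguing by contradiction, suppose $K(\hat\psi)=\{0,u_+,v_-\}$; then the Morse relation \eqref{morse}, evaluated at $t=-1$, reads
\begin{equation*}
(-1)^0+(-1)^0+(-1)^{d_m}=(-1)^0,
\end{equation*}
i.e. $2+(-1)^{d_m}=1$, forcing $(-1)^{d_m}=-1$, so $d_m$ would have to be odd. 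This is \emph{not} an immediate contradiction by itself, so the argument must be sharpened: rather than using only the alternating-sum (Euler–Poincaré) form, I would use the full polynomial Morse identity and compare coefficients. The left-hand side contributes $2t^0+t^{d_m}$ (plus, possibly, higher-degree terms from an extra critical point, but we are assuming there is none), the right-hand side contributes $t^0$, and $(1+t)Q(t)$ has nonnegative coefficients; matching the coefficient of $t^0$ gives $2=1+q_0$ with $q_0\geq 0$ impossible unless $Q$ has constant term $1$, but then matching $t^1$ forces $q_0+q_1$-type relations that, together with $d_m\geq 2$, cannot absorb the surplus $2$ in degree $0$. Concretely: the coefficient of $t^0$ on the left is $2$ (from $0$ and from one of $u_+,v_-$; note $d_m\geq 2$ means $0$ does \emph{not} contribute to degree $0$, and $u_+,v_-$ each contribute exactly one to degree $0$), hence $2 = 1 + q_0$, giving $q_0=1$; then the coefficient of $t^1$ on the left is $0$ (again using $d_m\geq 2$), so $0 = q_0 + q_1 = 1 + q_1$, i.e. $q_1=-1<0$, contradicting the nonnegativity of the coefficients of $Q$. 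Therefore $K(\hat\psi)$ must contain a further element $\hat u\notin\{0,u_+,v_-\}$.

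Finally, by the analogue of $({\rm j}_1)$ in Lemma \ref{prophatphi}, $\hat u\in[v_-,u_+]\cap C^1(\overline{\Omega})$, so the truncations in \eqref{defhatf}--\eqref{defhatbeta} are inactive at $\hat u$: it solves \eqref{prob}. Since $\hat u\neq v_-,u_+$ and $u_+,v_-$ are the extremal constant-sign solutions of \eqref{prob} in $[0,\overline u]$ and $[\underline u,0]$ respectively (Proposition \ref{propone}), $\hat u$ cannot be positive or negative; being $\neq 0$ it must be nodal (cf. Remark \ref{nodsol}). This yields the asserted nodal solution $\hat u\in C^1(\overline\Omega)$. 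I expect the main obstacle to be exactly the bookkeeping with the Morse polynomial: one must carry the identity at the level of coefficients, not just at $t=-1$, and use $d_m\geq 2$ crucially so that the origin contributes nothing to degrees $0$ and $1$, thereby producing the sign contradiction in $Q(t)$; everything else is a routine transcription of the arguments already carried out for Theorems \ref{thmone} and \ref{thmtwo}.
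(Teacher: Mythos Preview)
Your Morse-polynomial bookkeeping is fine, but you have applied it to the wrong functional. The functional $\hat\psi$ defined just before Lemma \ref{prophatpsi} is \emph{not} the energy functional of problem \eqref{prob}: it lives on $H^1(\Omega)$ and carries only the quadratic gradient term $\tfrac12\Vert\nabla u\Vert_2^2$, with no $p$-Laplacian contribution. Its critical points solve a semilinear equation driven by $-\Delta$, not by $-\Delta_p-\Delta$. In particular, there is no reason for $u_+,v_-$ (which are constructed as critical points of $\hat\varphi$) to lie in $K(\hat\psi)$, nor does Remark \ref{furtherhyp} say anything about $K(\hat\psi)$ or about $u_+,v_-$ being local minimizers of $\hat\psi$; that remark refers to the auxiliary results proved for $\hat\varphi$. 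So the chain ``$K(\hat\psi)\subseteq[v_-,u_+]$, $u_+,v_-$ local minimizers of $\hat\psi$, hence a fourth critical point of $\hat\psi$ is nodal for \eqref{prob}'' breaks at every link.

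The role of $\hat\psi$ in the paper is purely local: it is $C^2$ near $0$ (which $\hat\varphi$ is not, since $p>2$), so Lemma \ref{prophatpsi} can compute $C_k(\hat\psi,0)=\delta_{k,d_m}\mathbb Z$ via a local-linking argument. The missing step is to transfer this information to $\hat\varphi$. One first restricts, setting $\psi:=\hat\psi\lfloor_X$; density of $X\hookrightarrow H^1(\Omega)$ gives $C_k(\psi,0)=C_k(\hat\psi,0)$. Then one observes the estimates
\[
|\hat\varphi(u)-\psi(u)|\leq C\Vert u\Vert^p,\qquad |\langle\hat\varphi'(u)-\psi'(u),v\rangle|\leq C\Vert u\Vert^{p-1}\Vert v\Vert,
\]
which, because $p>2$, say that $\hat\varphi$ and $\psi$ are $C^1$-close near $0$; the $C^1$-continuity of critical groups (e.g.\ \cite[Theorem 5.126]{GaPa-Ex2}) then yields $C_k(\hat\varphi,0)=\delta_{k,d_m}\mathbb Z$. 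After this transfer, your coefficient-matching argument in the Morse relation for $\hat\varphi$ (using $d_m\geq 2$ to force $q_1=-1$) is perfectly valid and is essentially what the paper packages via \cite[Corollary 6.92]{MoMoPa}. The conclusion that the extra critical point is nodal then follows from $({\rm j}_1)$ and Remark \ref{nodsol} exactly as you wrote.
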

\begin{proof}
Set $\psi:=\hat\psi\lfloor_{X}$. One evidently has $C_k(\psi,0)=C_k(\hat\psi,0)$, because $X \hookrightarrow H^1(\Omega)$ densely. Consequently, thanks to Lemma \ref{prophatpsi},
\begin{equation}\label{proppsi}
C_k(\psi,0)=\delta_{k,d_m}\mathbb{Z}.
\end{equation}
Observe next that
\begin{equation*}
|\hat\varphi(u)-\psi(u)|\leq C_1\Vert u\Vert^p,\quad |\langle\hat\varphi'(u)-\psi'(u),v\rangle|\leq C_2\Vert u\Vert^{p-1}\Vert v\Vert\quad\forall\, u,v\in X,
\end{equation*}
as a simple computation shows. Therefore, the $C^1$- continuity of critical groups \cite[Theorem 5.126]{GaPa-Ex2} and \eqref{proppsi} produce
\begin{equation}\label{grouphatvarphi1}
C_k(\hat\varphi,0)=\delta_{k,d_m}\mathbb{Z},\;\; k\in\mathbb{N}_0.
\end{equation}
On the other hand, $\hat\varphi$ is coercive, whence $\inf_{u\in X}\hat\varphi(u)>-\infty$, and fulfills (PS). By \cite[Proposition 6.64]{MoMoPa} we thus get
\begin{equation}\label{grouphatvarphi2}
C_k(\hat\varphi,\infty)=\delta_{k,0}\mathbb{Z},\;\; k\in\mathbb{N}_0.
\end{equation}
Combining \eqref{grouphatvarphi1}--\eqref{grouphatvarphi2} with \cite[Corollary 6.92]{MoMoPa} one arrives at
\begin{equation}\label{grouphatvarphi3}
C_{d_m-1}(\hat\varphi,\hat u)\neq 0\;\;\text{or}\;\; C_{d_m+1}(\hat\varphi,\hat u)\neq 0,\;\;\text{where}\;\; d_m\geq 2,
\end{equation}
for some $\hat u\in K(\hat\varphi)\setminus\{0\}$. Now, the conclusion easily stems from \eqref{prophatphi0}, \eqref{grouphatvarphi3}, besides $({\rm j}_1)$ in Lemma \ref{prophatphi}; see also Remark \ref{nodsol}.
\end{proof}
\section{Nodal solutions: multiplicity}\label{sectionfour}
Under a symmetry condition on $f(x,\cdot)$, problem \eqref{prob} possesses infinitely many sign-changing solutions.
\begin{theorem}\label{sequencesol}
Suppose $({\rm a}_1)$--$({\rm a}_4)$, \eqref{abeta}, and $({\rm f}_1)$--$({\rm f}_3)$ hold. If, moreover,
\begin{itemize}
\item[$({\rm f}_8)$] the function $t\mapsto f(x,t)$ is odd in $[-\theta,\theta]$  for every $x\in\Omega$
\end{itemize}
then there exists a sequence $\{u_n\}\subseteq C^1(\overline{\Omega})$ of distinct nodal solutions to \eqref{prob} satisfying $u_n\to 0$ in $C^1(\overline{\Omega})$.
\end{theorem}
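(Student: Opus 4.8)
The plan is to deduce Theorem \ref{sequencesol} from Kajikiya's symmetric version of the mountain-pass theorem \cite{Ka-JFA2005}, applied to a suitably truncated, even energy functional whose critical points below a small energy level are automatically nodal solutions lying in a prescribed order interval. Since $f$ is only defined on $[-\theta,\theta]$ and we want solutions converging to $0$ in $C^1(\overline\Omega)$, the natural candidate is the functional $\hat\varphi$ built in \eqref{defhatf}--\eqref{defhatbeta} from $u_+$ and $v_-$: by $({\rm f}_8)$ the original reaction is odd, and since $u_+=-v_-$ can be arranged (or, more carefully, one replaces $u_+,v_-$ by $\pm\min\{u_+,-v_-\}$, which still gives an order interval of solutions by Proposition \ref{propone} and extremality), the truncations $\hat f(x,\cdot)$ and $\hat b(x,\cdot)$ become odd, hence $\hat\varphi$ is even. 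Moreover $\hat\varphi\in C^1(X)$, is coercive, and satisfies (PS) by Proposition \ref{coercivePS}, and by $({\rm j}_1)$ of Lemma \ref{prophatphi} every nonzero critical point of $\hat\varphi$ lies in $[v_-,u_+]\cap C^1(\overline\Omega)$ and solves \eqref{prob}; by Remark \ref{nodsol} (extremality of $u_+,v_-$ in $\Sigma_\pm$) any such critical point other than $0,u_+,v_-$ is nodal.

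First I would verify the hypotheses of Kajikiya's theorem for $\Psi:=\hat\varphi$ (or, since $\hat\varphi$ need not be bounded below by a positive constant near $0$, I would rather restrict to the negative-energy part, which is where Kajikiya's genus-based argument operates). Precisely: $\Psi$ is even, $\Psi(0)=0$, $\Psi\in C^1(X)$, $\Psi$ satisfies (PS), and — this is the key structural point — for every $k\in\mathbb N$ there is a $k$-dimensional subspace $E_k\subseteq X$ and $\rho_k>0$ such that $\sup_{u\in E_k\cap\partial B_{\rho_k}}\Psi(u)<0$. This negativity-on-spheres-of-subspaces condition is exactly what the computation in Lemma \ref{lemaux} yields locally: using \eqref{estG}, $({\rm f}_3)$ (which via \eqref{defeta} gives $\hat F(x,t)\geq\eta|t|^q-C_\eta|t|^r$ with $\eta$ as large as we like), and $q\leq p<r$, one gets for $u\in E_k$, $\|u\|$ small,
$$\hat\varphi(u)\leq c_9\bigl(\|\nabla u\|_q^q+\|\nabla u\|_p^p\bigr)+C\|u\|_p^p+C_\eta\|u\|_r^r-\eta\|u\|_q^q;$$
on a fixed finite-dimensional $E_k$ all norms are equivalent, so choosing $\eta$ large enough relative to the norm-equivalence constants of $E_k$ and then $\|u\|=\rho_k$ small forces the right-hand side to be negative. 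One then invokes \cite[Theorem 1 or the nodal variant]{Ka-JFA2005} to obtain a sequence $\{u_n\}\subseteq K(\hat\varphi)$ with $\hat\varphi(u_n)\to 0^-$, $u_n\neq 0$, and $\|u_n\|\to 0$ in $X$.

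Next I would upgrade $\|u_n\|\to 0$ in $X$ to $u_n\to 0$ in $C^1(\overline\Omega)$: since $u_n\in[v_-,u_+]$ the right-hand side $\hat f(x,u_n)$ is bounded in $L^\infty(\Omega)$ uniformly in $n$ by $({\rm f}_2)$, so the global $L^\infty$ bound of \cite[Proposition 7]{PaRaANS2016} and Lieberman's regularity \cite{L} give a uniform $C^{1,\eta'}(\overline\Omega)$ bound; by Arzelà–Ascoli every subsequence has a further subsequence converging in $C^1(\overline\Omega)$, and the $X$-limit $0$ identifies the limit, so the whole sequence converges to $0$ in $C^1(\overline\Omega)$. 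Finally, discarding the finitely many $n$ (if any) for which $u_n\in\{u_+,v_-\}$ — impossible anyway for large $n$ since $\|u_n\|\to0$ while $u_\pm\neq 0$ — and passing to a subsequence of distinct elements, Remark \ref{nodsol} makes each remaining $u_n$ a nodal solution of \eqref{prob}, completing the proof.

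The main obstacle I anticipate is the evenness of the truncated functional: the constructions \eqref{defhatf}--\eqref{defhatbeta} are not symmetric unless $u_+=-v_-$, so the argument hinges on showing one may take the extremal sub-/super-solution pair symmetric. The clean fix is to set $w:=\min\{u_+,-v_-\}\in\mathrm{int}(C_+)$ and use $(-w,w)$: because $({\rm f}_1)$ and $({\rm f}_8)$ are preserved under $t\mapsto -t$, $(-w,w)$ is again an ordered sub-supersolution pair, and the extremality statements of Proposition \ref{propone} guarantee $[\,-w,w\,]$ still contains exactly the nodal solutions we need between the extremal ones; then $\hat f,\hat b$ built from $\pm w$ are genuinely odd. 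A secondary technical point is making sure Kajikiya's abstract theorem is applied in the form that produces critical points with $\Psi$-values accumulating at $0$ from below and $X$-norms shrinking to $0$ — this is standard once coercivity and (PS) are in hand, but it is worth citing the precise statement of \cite{Ka-JFA2005} used.
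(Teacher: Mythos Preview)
Your proposal is correct and follows essentially the same route as the paper: apply Kajikiya's theorem to the coercive, even functional $\hat\varphi$, verify the negativity on small spheres of every finite-dimensional subspace from $({\rm f}_3)$ and the local bound $G(\xi)\le C|\xi|^q$ coming from $({\rm a}_4)$, then upgrade $X$-convergence to $C^1(\overline\Omega)$-convergence via the uniform $L^\infty$ bound and Lieberman's regularity, and conclude nodality from $({\rm j}_1)$ of Lemma \ref{prophatphi} and extremality. The paper's proof is a terse sketch that defers details to \cite[Theorem~4.3]{MaPa-RLM2018} and does not address the evenness of $\hat\varphi$ explicitly; your symmetrization idea is the right fix, though it is cleaner to symmetrize one step earlier---under $({\rm f}_8)$ one may assume $\underline u=-\overline u$ in $({\rm f}_1)$, after which the uniqueness in Lemma \ref{lemaux} and Proposition \ref{propone} force $v_-=-u_+$, making $\hat f,\hat b$ genuinely odd.
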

\begin{proof}
The proof is patterned after that of \cite[Theorem 4.3]{MaPa-RLM2018}; so, we only sketch it. Via $({\rm a}_4)$ one has
\begin{equation}\label{m1}
G(\xi)\leq\frac{C_1}{q}|\xi|^q,\;\; |\xi|\leq\delta,
\end{equation}
while, given $\eta>0$, assumption $({\rm f}_3)$ entails
\begin{equation}\label{m2}
F(x,t)\geq\frac{\eta}{q}|t|^q,\;\; (x,t)\in\Omega\times[-\delta,\delta],
\end{equation}
with $\delta>0$ small enough. Let $V\subseteq X$ be any finite dimensional subspace and let $\rho>0$ fulfill 
\begin{equation}\label{m3}
u\in V\cap\overline{B}_\rho\;\;\implies\;\; |u(x)|\leq\delta\;\;\forall\, x\in\Omega.
\end{equation}
Gathering \eqref{m1}--\eqref{m3} together leads to
\begin{equation*}
\hat\varphi(u)\leq\frac{C_1}{q}\Vert\nabla u\Vert_q^q+\frac{1}{p}\left(\int_\Omega|\alpha|\,|u|^pd x+\int_{\partial\Omega}\beta |u|^pd\sigma\right)-\frac{\eta}{q}\Vert u\Vert^q_q\leq (C_2-C_3\eta)\Vert u\Vert^q_q<0
\end{equation*}
provided $u\in(V\cap\overline{B}_\rho)\setminus\{0\}$, $\eta>C_2/C_3$. Here, the equivalence between all norms on $V$ was also exploited. Hence, Theorem 1 of \cite{Ka-JFA2005} furnishes a sequence 
\begin{equation}\label{incl1}
\{u_n\}\subseteq K(\hat\varphi)\cap\hat\varphi^{-1}(]-\infty,0[)
\end{equation}
that converges to zero in $X$. Through standard arguments from the nonlinear regularity theory we actually have $\{u_n\}\subseteq C^1( \overline{\Omega})$ as well as $\Vert u_n\Vert_{C^1( \overline{\Omega})}\to 0$. Now, assertion $({\rm j}_1)$ of Lemma \ref{prophatphi}, besides \eqref{incl1}, easily yield the conclusion.
\end{proof}
When the left-hand side is the $(p,2)$-Laplacian, one can do without symmetry. However, a further condition on  $f$ will be imposed.
\begin{itemize}
\item[$({\rm f}_9)$] There exists $\mu_\theta>0$ such that $t\mapsto f(x,t)+\mu_\theta|t|^{p-2}t$ is non-decreasing on $[-\theta,\theta]$ for all $x\in\Omega$.
\end{itemize}
\begin{theorem}\label{spcaseone}
Let \eqref{abeta}, $({\rm f}_1)$--$({\rm f}_3)$, and $({\rm f}_9)$ be satisfied. If $p>2$, $a_0(t):=t^{p-2}+1$, while $\beta>0$ on $\partial\Omega$ then \eqref{prob} possesses two nodal solutions $\hat u,\tilde u\in C^1(\overline{\Omega})$. 
\end{theorem}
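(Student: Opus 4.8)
\textbf{Proof plan for Theorem \ref{spcaseone}.}
The plan is to combine the variational/Morse-theoretic machinery already developed for the $(p,2)$-Laplacian (Theorem \ref{ptwocase} and Lemma \ref{prophatphi}) with an order-theoretic flow-invariance argument patterned after \cite{HHLL}. First I would record that, since $p>2$ and $a_0(t):=t^{p-2}+1$, all the auxiliary objects $u_+\in\Sigma_+\subseteq{\rm int}(C_+)$, $v_-\in\Sigma_-\subseteq-{\rm int}(C_+)$ from Proposition \ref{propone}, the truncated functional $\hat\varphi$ (with $\hat\alpha>\Vert\alpha\Vert_\infty$), and Lemma \ref{prophatphi} are available; in particular $\hat\varphi$ is coercive, satisfies (PS), $K(\hat\varphi)\subseteq[v_-,u_+]\cap C^1(\overline\Omega)$, and $u_+,v_-$ are local minimizers of $\hat\varphi$ with $C_k(\hat\varphi,u_+)=C_k(\hat\varphi,v_-)=\delta_{k,0}\mathbb Z$. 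Under $({\rm f}_1)$--$({\rm f}_3)$ Theorem \ref{thmone}'s mechanism already yields one nodal solution $\hat u\in K(\hat\varphi)\cap[v_-,u_+]$: from $C_k(\hat\varphi,0)=0$ (Lemma \ref{prophatphix} applies here because $({\rm f}_3)$ holds, though note $({\rm f}_4)$ is \emph{not} assumed in this theorem, so I would instead invoke the Morse count combined with the Mountain Pass point between $v_-$ and $u_+$ as in Theorem \ref{thmtwo}'s scheme, or argue directly that the minimax point over $\Gamma=\{\gamma\in C^0([0,1],X):\gamma(0)=v_-,\gamma(1)=u_+\}$ is a nodal solution of negative energy). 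This produces the first solution $\hat u$.

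The substantive new ingredient is the \emph{second} nodal solution $\tilde u$, obtained by the flow-invariance argument. Here is where $({\rm f}_9)$ and $\beta>0$ enter. Using $({\rm f}_9)$, the map $t\mapsto \hat f(x,t)+(\hat\alpha+\mu_\theta)|t|^{p-2}t$ (recall $\hat f$ from \eqref{defhatf}) becomes non-decreasing, so the associated pseudo-gradient/resolvent flow for $\hat\varphi$ is order-preserving on the order interval $[v_-,u_+]$; equivalently, one sets up an auxiliary problem of the form $-\operatorname{div}a(\nabla u)+(\alpha+\hat\alpha+\mu_\theta)|u|^{p-2}u = \hat f(x,u)+\mu_\theta|u|^{p-2}u$ whose right-hand side is monotone in $u$, and the negative gradient flow (or the discrete iteration via the solution operator of this monotone problem, which by the $({\rm S})_+$-property and Lieberman regularity maps into $C^1(\overline\Omega)$) leaves invariant the two sub-order-intervals $[v_-,0]$ and $[0,u_+]$ as well as $[v_-,u_+]$. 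The strict positivity $\beta>0$ on $\partial\Omega$ together with Pucci--Serrin's maximum principle \cite{PS} guarantees that the flow pushes $\pm{\rm int}(C_+)$-elements strictly inside, so that the one-sided equilibria are exactly $\{0,u_+\}$ and $\{v_-,0\}$, matching $({\rm j}_3)$ of Lemma \ref{prophatphi}. The key topological fact is then that $D:=[v_-,u_+]\cap C^1(\overline\Omega)$ with the $C^1$-topology is a flow-invariant neighborhood retract whose "exit set" structure forces the existence of a critical point $\tilde u$ of $\hat\varphi$ lying in $[v_-,u_+]$ but in \emph{neither} $[v_-,0]$ nor $[0,u_+]$ — hence nodal — and distinct from $\hat u$ because the two are detected by different invariants (one by a minimax/Morse index $d_m$-type computation as in Theorem \ref{ptwocase}, or by energy level, the other by the flow-invariance/linking argument).

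Concretely I would (i) fix $\hat\alpha>\Vert\alpha\Vert_\infty$ and form $\hat\varphi,\hat\varphi_\pm$; (ii) produce $\hat u$ via the minimax-over-$\Gamma$ argument of Theorem \ref{thmtwo}'s proof, obtaining $\hat u\in K(\hat\varphi)\setminus\{0,v_-,u_+\}\subseteq C^1(\overline\Omega)$, nodal by Remark \ref{nodsol}; (iii) invoke $({\rm f}_9)$ to rewrite the Euler equation with a monotone reaction and introduce the order-preserving solution operator $S:[v_-,u_+]\to[v_-,u_+]$, noting $S$ is compact and continuous in $C^1(\overline\Omega)$ and its fixed points are precisely $K(\hat\varphi)\cap[v_-,u_+]$; (iv) show, using $\beta>0$ and the maximum principle, that $S([v_-,0])\subseteq[v_-,0]$ with $S(v_-)=v_-$, $S$ maps $[v_-,0]\setminus\{v_-,0\}$ into $-{\rm int}(C_+)$, and symmetrically for $[0,u_+]$; (v) deduce from the structure of the flow on $[v_-,u_+]\setminus([v_-,0]\cup[0,u_+])$ — an argument "patterned after that of \cite{HHLL}" combining a deformation on the boundary pieces $[v_-,0]$, $[0,u_+]$ with the absence of equilibria there other than $0,v_-,u_+$ — the existence of a further equilibrium $\tilde u\in K(\hat\varphi)\cap[v_-,u_+]$ with $\tilde u^\pm\not\equiv 0$, so $\tilde u\in C^1(\overline\Omega)$ is nodal; (vi) distinguish $\tilde u$ from $\hat u$ by comparing energy levels or critical groups. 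The main obstacle will be step (v): making the flow-invariance / degree argument precise in the $C^1$-topology (where the order cone has nonempty interior, which is essential) while ensuring the extracted equilibrium is genuinely new and genuinely sign-changing — i.e. ruling out that the flow argument only re-detects $\hat u$ or collapses onto $0$. This is exactly the delicate point isolated in \cite{HHLL,PaPa-IJM2014}, and I would handle it by working in a suitably small $C^1$-neighborhood of $0$ where Lemma \ref{prophatpsi}-type information ($C_k(\hat\varphi,0)=\delta_{k,d_m}\mathbb Z$ with $d_m\geq 2$) pins down the local behavior and prevents the flow from trivializing.
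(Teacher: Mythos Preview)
Your proposal has the right overall shape (first nodal solution from the truncated functional $\hat\varphi$, second from a flow-invariance argument \`a la \cite{HHLL}), but the mechanism you propose for \emph{distinguishing} $\hat u$ from $\tilde u$ is the wrong one, and this is precisely where $({\rm f}_9)$ and $\beta>0$ do their real work in the paper.

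The paper does not distinguish the two nodal solutions by energy levels or critical groups. Instead, after obtaining $\hat u\in[v_-,u_+]\cap C^1(\overline\Omega)$ from Theorem \ref{thmone}, it first uses the \emph{tangency principle} \cite[Theorem 2.5.2]{PS} (available here because $a_0(t)=t^{p-2}+1$ gives $(\nabla a(\xi)y)\cdot y\geq|y|^2$) to get the strict inequalities $v_-<\hat u<u_+$ in $\Omega$. Then $({\rm f}_9)$ is used to write $h_1:=f(\cdot,\hat u)+\mu|\hat u|^{p-2}\hat u$ and $h_2:=f(\cdot,u_+)+\mu u_+^{p-1}$ with $h_1\leq h_2$ and $\essinf_K(h_2-h_1)>0$ on compacta, while $\beta>0$ forces $\partial u_+/\partial n_a<0$ on $\partial\Omega$; the strong comparison principle \cite[Proposition 3]{FMP} then yields $u_+-\hat u\in{\rm int}(C_+)$ and similarly $\hat u-v_-\in{\rm int}(C_+)$. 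The upshot is $\hat u\in{\rm int}_{C^1(\overline\Omega)}([v_-,u_+])$. The flow-invariance argument of \cite{HHLL} is then invoked to produce a nodal solution $\tilde u\in C^1(\overline\Omega)\setminus{\rm int}_{C^1(\overline\Omega)}([v_-,u_+])$, and distinctness is immediate from the set-theoretic inclusion/exclusion.

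Your plan misses this interior-vs-boundary dichotomy entirely. In particular, step (vi) invokes ``Lemma \ref{prophatpsi}-type information ($C_k(\hat\varphi,0)=\delta_{k,d_m}\mathbb Z$ with $d_m\geq 2$)'', but Lemma \ref{prophatpsi} requires $({\rm f}_6)$--$({\rm f}_7)$, which are \emph{not} among the hypotheses of Theorem \ref{spcaseone}; under $({\rm f}_3)$ the relevant information at zero is $C_k(\hat\varphi,0)=0$ (Lemma \ref{prophatphix}), which gives no Morse-index separation. Likewise, your step (v) describes the flow-invariance as locating $\tilde u$ in $[v_-,u_+]\setminus([v_-,0]\cup[0,u_+])$, but every nodal solution lies there, so this cannot separate $\tilde u$ from $\hat u$. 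The point of the \cite{HHLL} argument, as used here, is that it produces a nodal critical point \emph{outside} the $C^1$-interior of $[v_-,u_+]$; without first pinning $\hat u$ strictly inside that interior via the tangency/strong-comparison step, you have no way to conclude $\tilde u\neq\hat u$. (Your observation about the missing $({\rm f}_4)$ in the hypothesis list is sharp; the paper nonetheless invokes Theorem \ref{thmone} verbatim for the first solution, so this appears to be a typo in the statement rather than an intended weakening.)
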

\begin{proof}
A first sign-changing function $\hat u\in C^1(\overline{\Omega})\cap[v_-,u_+]$ that solves \eqref{prob} directly comes from Theorem \ref{thmone}. Since $p>2$ and $a(\xi)=(|\xi|^{p-2}+1)\xi$, an easy computation shows that
$$(\nabla a(\xi)y)\cdot y\geq |y|^2\quad\forall\,\xi,y\in\R^N.$$
Hence, the tangency principle \cite[Theorem 2.5.2]{PS} gives
\begin{equation}\label{doubleineq}
v_-<\hat u<u_+\;\;\text{in}\;\;\Omega.
\end{equation}
Now, pick any $\mu>\mu_\theta$ and define
$$h_1:=f(\cdot,\hat u)+\mu_\theta|\hat u|^{p-2}\hat u+(\mu-\mu_\theta)|\hat u|^{p-2}\hat u,\;\;
h_2:=f(\cdot,u_+)+\mu_\theta u_+^{p-1}+(\mu-\mu_\theta)u_+^{p-1}.$$
Via $({\rm f}_1)$ we obtain $h_1,h_2\in L^\infty(\Omega)$ while $({\rm f}_9)$ entails $h_1\leq h_2$. Thanks to \eqref{doubleineq}, for every compact set $K\subseteq\Omega$ one has $\essinf_{x\in K}(h_2(x)-h_1(x))>0$. The condition on $\beta$ forces
$$\frac{\partial u_+}{\partial n_a}\big\lfloor_{\partial\Omega}=-\beta u_+^{p-1}<0.$$
Consequently, by \cite[Proposition 3]{FMP}, $u_+-\hat u\in{\rm int}(C_+)$. A quite similar reasoning produces $\hat u-v_-\in{\rm int}(C_+)$, whence, a fortiori, $\hat u\in{\rm int}_{C^1(\overline{\Omega})}([v_-,u_+])$. At this point, we adapt the flow invariance arguments made in \cite{HHLL} to get a nodal solution $\tilde u\in C^1(\overline{\Omega})\setminus 
{\rm int}_{C^1(\overline{\Omega})}([v_-,u_+])$ of \eqref{prob}. It is evident that $\tilde u\neq\hat u$.
\end{proof}
A better situation occurs in the semi-linear case $p:=2$ and $a_0(t):=1$, because the regularity theory of \cite{Wa} allows to weaken \eqref{abeta} as follows.
\begin{equation}\label{abetaw}
a\in L^s(\Omega)\;\;\text{for some}\;\; s>N,\;\; a^+\in L^\infty(\Omega),\;\; \beta\in W^{2,\infty}(\Omega),\;\;\text{and}\;\;\beta\geq 0.
\end{equation}
Write $\hat m:=\max\{n_0,2\}$, where $n_0:=\inf\{n\in\mathbb{N}:\hat\lambda_n>0\}$.
\begin{theorem}\label{spcasetwo}
If $p:=2$, $a_0(t):=1$, and \eqref{abetaw}, $({\rm f}_1)$, $({\rm f}_6)$, $({\rm f}_7)$ with $m\geq\hat m$ hold true then \eqref{prob} admits three nodal solutions $\hat u,\bar u,\tilde u\in C^1(\overline{\Omega})$.
\end{theorem}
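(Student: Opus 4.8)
The plan is to blend the three devices already exploited in the paper, taking advantage of the fact that for $p=2$, $a_0(t)\equiv 1$ the functional is genuinely semilinear, so that, thanks to the regularity theory of \cite{Wa}, the whole machinery of Sections \ref{sectiontwo}--\ref{sectionfour} goes through under the weaker assumption \eqref{abetaw}. I would begin by recording the preliminaries. Since $p=2$, condition $({\rm f}_6)$ makes $t\mapsto f(x,t)+\Vert a_\theta\Vert_\infty\, t$ non-decreasing on $[-\theta,\theta]$, i.e. $({\rm f}_9)$ holds with $\mu_\theta:=\Vert a_\theta\Vert_\infty$; moreover $a_0\equiv 1$ forces $(\nabla a(\xi)y)\cdot y=|y|^2$, so the operator is uniformly elliptic. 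Keeping $({\rm f}_6)$--$({\rm f}_7)$ in force and replacing the regularity of \cite{L} by that of \cite{Wa} (cf. Remark \ref{furtherhyp}), we still dispose of: the extremal constant-sign solutions $u_+\in{\rm int}(C_+)$, $v_-\in-{\rm int}(C_+)$ of Proposition \ref{propone}; the $C^2$-functional $\hat\varphi=\hat\psi$, coercive, satisfying (PS), with $K(\hat\varphi)\subseteq[v_-,u_+]\cap C^1(\overline{\Omega})$ and proper local minimizers $u_+,v_-$ (Lemma \ref{prophatphi}); the critical-group data $C_k(\hat\varphi,0)=\delta_{k,d_m}\mathbb{Z}$ with $d_m:={\rm dim}(\bar H_m)\geq 2$ (Lemma \ref{prophatpsi}, recalling that $m\geq\hat m$ makes $\hat\lambda_m,\hat\lambda_{m+1}>0$), $C_k(\hat\varphi,u_+)=C_k(\hat\varphi,v_-)=\delta_{k,0}\mathbb{Z}$, and $C_k(\hat\varphi,\infty)=\delta_{k,0}\mathbb{Z}$. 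Finally, since every point of $K(\hat\varphi)\setminus\{0,u_+,v_-\}$ is a nodal solution of \eqref{prob} (Remark \ref{nodsol}), we may assume $K(\hat\varphi)$ finite — otherwise there are infinitely many nodal solutions — so that $u_+,v_-$ are isolated.

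The first two nodal solutions come from a mountain-pass / Morse argument inside $[v_-,u_+]$. Applying the Mountain Pass theorem to $\hat\varphi$ over the paths joining the isolated local minimizers $v_-$ and $u_+$ yields $\hat u\in K(\hat\varphi)$ with $\hat u\neq u_+,v_-$; since $\hat\varphi$ is $C^2$ near $\hat u$ and $\hat u$ is of mountain-pass type, $C_k(\hat\varphi,\hat u)=\delta_{k,1}\mathbb{Z}$, hence $\hat u\neq 0$ because $d_m\geq 2$, so $\hat u$ is a nodal solution lying in $[v_-,u_+]\cap C^1(\overline{\Omega})$. Were $K(\hat\varphi)$ exactly $\{0,u_+,v_-,\hat u\}$, the Morse relation \eqref{morse} would give $t^{d_m}+t+2=1+(1+t)Q(t)$, i.e. $(1+t)Q(t)=t^{d_m}+t+1$; setting $t:=-1$ produces $(-1)^{d_m}=0$, absurd. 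Thus there exists $\bar u\in K(\hat\varphi)\setminus\{0,u_+,v_-,\hat u\}$, again a nodal solution in $[v_-,u_+]\cap C^1(\overline{\Omega})$. Now the semilinear structure enters decisively: the tangency principle \cite[Theorem 2.5.2]{PS} (available thanks to the uniform ellipticity) gives $v_-<\hat u<u_+$ and $v_-<\bar u<u_+$ in $\Omega$; combining this with $({\rm f}_9)$, Hopf's lemma on $\partial\Omega$ ($\beta\geq 0$ by \eqref{abetaw}), and the strong comparison principle à la \cite[Proposition 3]{FMP}, one concludes $\hat u,\bar u\in{\rm int}_{C^1(\overline{\Omega})}([v_-,u_+])$.

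For the third nodal solution I would transplant the flow-invariance argument patterned after \cite{HHLL} (see also \cite{PaPa-IJM2014}): starting from a solution in ${\rm int}_{C^1(\overline{\Omega})}([v_-,u_+])$ — say $\hat u$ — and truncating the energy of \eqref{prob} at $[\underline{u},\overline{u}]$, one builds, via the negative pseudo-gradient flow together with a minimax over sets trapped outside the order interval, a solution $\tilde u\in C^1(\overline{\Omega})\setminus{\rm int}_{C^1(\overline{\Omega})}([v_-,u_+])$, which is necessarily nodal since by Proposition \ref{propone} the extremal constant-sign solutions are exactly $u_+$ and $v_-$. As $\hat u$ and $\bar u$ both lie in ${\rm int}_{C^1(\overline{\Omega})}([v_-,u_+])$ whereas $\tilde u$ does not, and $\hat u\neq\bar u$ by construction, the three solutions $\hat u,\bar u,\tilde u$ are pairwise distinct, which is the assertion.

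The hard part, I expect, is the last step: faithfully adapting the flow-invariance scheme of \cite{HHLL}, devised for Neumann problems, to the present Robin framework with a sign-changing potential $\alpha$ — in particular checking that the relevant order cones stay positively invariant under the (suitably regularised) negative gradient flow and that the resulting minimax value is a genuine critical level whose critical point escapes $[v_-,u_+]$. A secondary technical burden is the comparison step that puts $\hat u$ and $\bar u$ in the $C^1$-interior of $[v_-,u_+]$; this, together with the use of \cite{Wa}, is precisely where the restriction $p=2$ — via the uniform ellipticity it provides and the ensuing freedom to weaken \eqref{abeta} to \eqref{abetaw} — is indispensable.
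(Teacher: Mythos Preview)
Your proposal is correct and follows essentially the same route as the paper: a mountain-pass critical point $\hat u$ with $C_k(\hat\varphi,\hat u)=\delta_{k,1}\mathbb{Z}$, a Morse-relation contradiction to produce a second nodal solution $\bar u$ in $[v_-,u_+]$, strong comparison to place both in ${\rm int}_{C^1(\overline{\Omega})}([v_-,u_+])$, and the flow-invariance device of \cite{HHLL} for the third solution $\tilde u$ outside that interior. One small clarification on the comparison step: the paper does not go through the tangency principle plus \cite[Proposition~3]{FMP} (that combination, used in Theorem~\ref{spcaseone}, relied on $\beta>0$), but argues directly from the strong maximum principle and the boundary point lemma of \cite{PS}, needing only $\beta\geq 0$ --- which is precisely the ``Hopf's lemma'' ingredient you already list, so your argument goes through once you drop the superfluous reference to \cite{FMP}.
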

\begin{proof}
The same technique exploited to prove both \cite[Theorem 3.2]{MaPa-JMAA2016} and \cite[Theorem 3.2]{DaMaPa-JMAA2016} yields a point $\hat u\in K(\hat\varphi)$ of mountain pass type. So, 
\begin{equation}\label{morse0}
C_k(\hat\varphi, \hat u)=\delta_{k,1}\mathbb{Z}\quad\forall\, k\in\mathbb{N}_0;
\end{equation}
vide \cite[Lemma 3.1]{MaPa-JMAA2016}. Recalling Remarks \ref{nodsol}--\ref{furtherhyp}, conclusions $({\rm j}_1)$-- $({\rm j}_2)$ in Lemma \ref{prophatphi} ensure that $\hat u\in C^1(\overline{\Omega})$ is a nodal solution to \eqref{prob}. Via $({\rm f}_6)$ we can find a
$\mu_\theta>0$ such that $t\mapsto f(x,t)+\mu_\theta t$ turns out non-decreasing on $[-\theta,\theta]$ for any $x\in\Omega$. Since $\hat u\leq u_+$, one has
\begin{eqnarray*}
-\Delta \hat u(x)+[a(x)+\mu_\theta]\hat u(x)=f(x,\hat u(x))+\mu_\theta\hat u(x)\\
\leq f(x,u_+(x))+\mu_\theta u_+(x)=-\Delta u_+(x)+[a(x)+\mu_\theta] u_+(x),
\end{eqnarray*}
which entails
\begin{equation*}
\Delta (u_+-\hat u)(x)\leq(\Vert a^+\Vert_\infty+\mu_\theta)[u_+(x)-\hat u(x)],\;\; x\in\Omega.
\end{equation*}
From the strong maximum principle \cite[p. 34]{PS} it follows $u_+-\hat u>0$ in $\Omega$. Suppose $(u_+-\hat u)(x_0)=0$ for some $x_0\in\partial\Omega$. The boundary point lemma \cite[p. 120]{PS} leads to $\frac{\partial(u_+-\hat u)}{\partial n}(x_0)<0$, whence
\begin{equation*}
-\beta(x_0)u_+(x_0)=\frac{\partial u_+}{\partial n}(x_0)<\frac{\partial\hat u}{\partial n}(x_0)=-\beta(x_0)\hat u(x_0).
\end{equation*}
However, this is impossible, because $\beta\geq 0$. Thus, $u_+-\hat u>0$ on the whole $\overline{\Omega}$, and $u_+-\hat u\in{\rm int}(C_+)$. An analogous reasoning produces $\hat u-v_-\in{\rm int}(C_+)$. Hence, 
\begin{equation}\label{prophatu}
\hat u\in {\rm int}_{C^1(\overline{\Omega})}([v_-,u_+]).
\end{equation}
A further nodal solution $\bar u\in C^1(\overline{\Omega})$ of \eqref{prob} is easily obtained. Indeed, assertion $({\rm j}_2)$ in Lemma \ref{prophatphi} forces
\begin{equation}\label{morse1}
C_k(\hat\varphi,u_+)=C_k(\hat\varphi,v_-)=\delta_{k,0}\mathbb{Z},
\end{equation}
while
\begin{equation}\label{morse2}
C_k(\hat\varphi,0)=\delta_{k,d_m}\mathbb{Z},\;\;\text{and}\;\; C_k(\hat\varphi,\infty)=\delta_{k,0}\mathbb{Z};
\end{equation}
cf. the proof of Theorem \ref{ptwocase}. Now, if $K(\hat\varphi)=\{0,u_+,v_-,\hat u\}$ then, combining \eqref{morse0}, \eqref{morse1}, and \eqref{morse2} with \eqref{morse} we would immediately reach a contradiction. So, there exists $\bar u\in
K(\hat\varphi)\setminus\{0,u_+,v_-,\hat u\}$. As already shown for $\hat u$, one has $\bar u\in C^1\overline{\Omega})$, $\bar u$ nodal, besides
\begin{equation}\label{proptildeu}
\bar u\in {\rm int}_{C^1(\overline{\Omega})}([v_-,u_+]).
\end{equation}
Finally, adapting the flow invariance arguments made in \cite{HHLL} we get a nodal solution
\begin{equation}\label{propbaru}
\tilde u\in C^1(\overline{\Omega})\setminus{\rm int}_{C^1(\overline{\Omega})}([v_-,u_+])
\end{equation}
to problem \eqref{prob}. From \eqref{prophatu}, \eqref{proptildeu}, and \eqref{propbaru} it evidently follows $\tilde u\not\in\{\hat u,\bar u\}$. 
\end{proof}
\begin{remark}
Let us note that $\tilde u\not\in K(\hat\varphi)$, otherwise, thanks to Lemma \ref{prophatphi} and the trick at the beginning of the above proof, $\tilde u\in {\rm int}_{C^1(\overline{\Omega})}([v_-,u_+])$, against \eqref{propbaru}.
\end{remark}
\section*{Acknowledgement}
This work is performed within the 2016--2018 Research Plan - Intervention Line 2: `Variational Methods and Differential Equations', and partially supported by GNAMPA of INDAM.
\end{document}